\newcommand\reallywidetilde[1]{\ThisStyle{%
     \setbox0=\hbox{$\SavedStyle#1$}%
     \stackengine{-.1\LMpt}{$\SavedStyle#1$}{%
         \stretchto{\scaleto{\SavedStyle\mkern.2mu\AC}{.5150\wd0}}{.6\ht0}%
     }{O}{c}{F}{T}{S}%
}}
\newtheorem{theorem}{Theorem}
\newtheorem*{theorem*}{Theorem}
\newtheorem{lemma}[theorem]{Lemma}
\newtheorem{corollary}[theorem]{Corollary}
\newtheorem{question}[theorem]{Question}
\theoremstyle{definition}
\newtheorem{remark}[theorem]{Remark}
\newtheorem{definition}[theorem]{Definition}
\newtheorem{example}[theorem]{Example}
\newcommand{\overbarint}{
\rule[.036in]{.12in}{.009in}\kern-.16in \displaystyle\int }
\newcommand{\overbarcal}{\mbox{$ \rule[.036in]{.11in}{.007in}\kern-.128in\int $}}
\def\overtilde#1{\mathop{\vbox{\m@th\ialign{##\crcr\noalign{\kern3\p@}%
      \sortoftildefill\crcr\noalign{\kern3\p@\nointerlineskip}%
      $\hfil\displaystyle{#1}\hfil$\crcr}}}\limits}
\def\sortoftildefill{$\m@th \setbox\z@\hbox{$\braceld$}%
  \braceld\leaders\vrule \@height\ht
  \z@ \@depth\z@\hfill\braceru$}
\newcommand{\bbbn}{\mathbb N}
\newcommand{\bbbr}{\mathbb R}
\newcommand{\eps}{\varepsilon}
\newcommand{\bbbs}{\mathbb S}
\newcommand{\bbbb}{\mathbb B}
\def\5{\text{\Saturn}}
\def\dist{\operatorname{dist}}
\def\id{\operatorname{id}}
\def\supp{\operatorname{supp}}
\newcommand{\overbar}[1]{\mkern 1.7mu\overline{\mkern-1.7mu#1\mkern-1.5mu}\mkern 1.5mu}
\def\mvint_#1{\mathchoice
          {\mathop{\vrule width 6pt height 3 pt depth -2.5pt
                  \kern -8pt \intop}\nolimits_{\kern -3pt #1}}%
          {\mathop{\vrule width 5pt height 3 pt depth -2.6pt
                  \kern -6pt \intop}\nolimits_{#1}}%
          {\mathop{\vrule width 5pt height 3 pt depth -2.6pt
                  \kern -6pt \intop}\nolimits_{#1}}%
          {\mathop{\vrule width 5pt height 3 pt depth -2.6pt
                  \kern -6pt \intop}\nolimits_{#1}}}
\numberwithin{theorem}{section} \numberwithin{equation}{section}
\title{Gluing diffeomorphisms, bi-Lipschitz mappings and homeomorphisms}
\author[P. Goldstein]{Pawe\l{}  Goldstein}
\address{Pawe\l{} Goldstein, Institute of Mathematics, Faculty of Mathematics, Informatics and Mechanics, University of Warsaw, Banacha 2, 02-097 Warsaw, Poland} \email{P.Goldstein@mimuw.edu.pl}
\thanks{P.G.\ was supported by NCN grant no 2019/35/B/ST1/02030}
\author[Z. Grochulska]{Zofia Grochulska}
\address{Zofia Grochulska, Department of Mathematics and Statistics, University of Jyväskylä, P.O. Box 35 (MaD), FI-40014 Finland\\ and  University of Warsaw, Banacha 2, 02-097 Warsaw, Poland} \email{zofia.z.grochulska@jyu.fi}
\thanks{Z.G.\ was supported by NCN grant no 2022/45/N/ST1/02977}
\author[P. Haj\l{}asz]{Piotr Haj\l{}asz}
\address{Piotr Hajlasz, Department of Mathematics, University of Pittsburgh, Pittsburgh, PA 15260, USA}
\email{hajlasz@pitt.edu}
\thanks{P.H.\ was supported by NSF grant DMS-2055171}
\subjclass[2020]{Primary: 54C20  Secondary: 58C07, 58C25}
\keywords{diffeomorphisms, homeomorphisms, bi-Lipschitz mappings, stable homeomorphisms, annulus theorem}
\begin{document}


\begin{abstract}
Cerf and Palais independently proved a remarkable result about extending diffeomorphisms defined on smooth balls in a manifold to global diffeomorphisms of the manifold onto itself. We explain Palais' argument and show how to extend it to the class of homeomorphisms and bi-Lipschitz homeomorphisms. While Palais' argument is surprising, it is elementary and short. However, its extension to bi-Lipschitz homeomorphisms and homeomorphisms requires deep results: the stable homeomorphism and the annulus theorems.
\end{abstract}
\maketitle

\section{Introduction}
The following question is very natural:
\begin{question}
\label{Q1}
Suppose that $F:B^n(0,1)\to\bbbr^n$ is a diffeomorphism onto its image. Assume also that $F$ can be extended to a diffeomorphism of some neighborhood of $\overbar{B}^n(0,1)$. Can we extend $F$ to a diffeomorphism of $\bbbr^n$ onto $\bbbr^n$?
\end{question}
Spoiler: The answer is ``yes'' and the proof is elementary! Take your time to think deeply about this problem. If you are not already familiar with a solution, you will likely find it {\em very} challenging. Good luck---you will need it!

The diffeomorphism $F$ can be very complicated and there is no apparent reason why an extension $F$ should exist:
it follows from the celebrated theorem of Milnor \cite[Theorem~5]{milnor} that if $A\subset\bbbr^7$ is a standard  annulus, then there is an orientation preserving surjective diffeomorphism $F:A\to A$ that can be extended to a diffeomorphism of a neighborhood of the closure $\overbar{A}$, can be extended to a bi-Lipschitz homeomorphism of $\bbbr^7$, but it cannot be extended to a diffeomorphism of $\bbbr^7$, see \Cref{T16}.

It appears that Cerf \cite{cerf2} and Palais \cite{Palais} were independently the first to address \Cref{Q1} and to provide a positive answer.
Palais' construction is remarkably short and elementary, yet so unintuitive that, even when one understands every step of the proof (which, as we mentioned, is simple), it remains difficult to grasp why it works.

In fact, Cerf and Palais proved a slightly more general result:
\begin{theorem}
\label{T9}
Let $\mathcal{M}^n$ be an $n$-dimensional connected and oriented manifold.  Assume that $B\subset\mathcal{M}^n$ is diffeomorphic to $B^n(0,1)$ and let $F:B\to \mathcal{M}^n$ be an orientation preserving diffeomorphism onto its image, that can be extended to a diffeomorphism of a neighborhood of the closure $\overbar{B}$. Then $F$ can be extended to a surjective diffeomorphism $F:\mathcal{M}^n\to\mathcal{M}^n$. 
\end{theorem}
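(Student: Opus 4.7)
The plan is to reduce the theorem to a statement about smooth isotopies of embeddings of the closed unit ball and then invoke the classical smooth isotopy extension theorem. Fix a diffeomorphism $\psi:B^n(0,1)\to B$ which, by the hypothesis on $F$, extends smoothly to a neighborhood of $\overbar{B^n(0,1)}$. Set $e_0=\psi$ and $e_1=F\circ\psi$: two orientation-preserving smooth embeddings of $\overbar{B^n(0,1)}$ into $\mathcal{M}^n$. Given a smooth isotopy $(e_t)_{t\in[0,1]}$ of embeddings between them, the isotopy extension theorem produces a compactly supported smooth ambient isotopy $(G_t)$ of $\mathcal{M}^n$ with $G_0=\operatorname{id}$ and $G_t\circ e_0=e_t$; the map $G_1$ then equals $F$ on $B$ and extends it to a surjective diffeomorphism of $\mathcal{M}^n$.

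Thus the substance of the theorem is to show that any two orientation-preserving smooth embeddings $e,e':\overbar{B^n(0,1)}\to\mathcal{M}^n$ are smoothly isotopic. I would carry this out in three substeps. \emph{Shrinking:} the smooth extension past $\overbar{B^n(0,1)}$ makes $e_s(x):=e(sx)$, $s\in[\eta,1]$, a smooth isotopy of embeddings compressing the image of $e$ into an arbitrarily small neighborhood of $e(0)$; apply the same reduction to $e'$. \emph{Sliding:} using connectedness of $\mathcal{M}^n$, choose a smooth path from $e(0)$ to $e'(0)$, cover it by finitely many coordinate charts, and in each chart use a compactly supported translation isotopy of $\mathcal{M}^n$; concatenating these drags the small image of $e$ into a single chart in which it shares its basepoint with $e'$. \emph{Straightening:} read both embeddings in that chart as maps $\overbar{B^n(0,1)}\to\bbbr^n$ sending $0$ to $0$, and observe that the rescaling homotopy $s^{-1}e(sx)$, extended to $s=0$ by $De(0)$, smoothly deforms each embedding to its derivative at the origin, an element of $\mathrm{GL}^+(n,\bbbr)$; path-connectedness of that group completes the isotopy.

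The main difficulty will be technical: smoothly concatenating the three isotopies (via cutoff functions in the time parameter), verifying that the sliding construction glues smoothly across chart overlaps without destroying the embedding property, and confirming that the rescaling homotopy really extends smoothly down to $s=0$ as a family of $C^\infty$ embeddings. A deeper external input is the smooth isotopy extension theorem itself, which is standard but not elementary---it is usually proved by integrating a time-dependent vector field supported in a tubular neighborhood of the trace $\{(t,e_t(x)):t\in[0,1],\,x\in\overbar{B^n(0,1)}\}$. The orientation hypothesis enters essentially in the straightening step, where connectedness of $\mathrm{GL}^+(n,\bbbr)$, as opposed to the disconnected $\mathrm{GL}(n,\bbbr)$, is exactly what makes the final linear isotopy possible. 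Given the paper's promise that Palais's proof is ``short and elementary,'' I expect his actual argument sidesteps both the isotopy extension theorem and the full path-connectedness of the embedding space in favor of a more direct and surprising construction.
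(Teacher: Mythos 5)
Your proposal is correct in outline, but it takes a genuinely different route from the paper. You reduce the theorem to the disc theorem (any two orientation\nobreakdash-preserving embeddings of $\overbar{B}^n(0,1)$ into a connected oriented $\mathcal{M}^n$ are ambiently isotopic) and then invoke the smooth isotopy extension theorem; this is essentially the standard textbook argument (e.g.\ Hirsch, Ch.~8). The paper instead proves the result via two ingredients: a local linearization step (Lemma~\ref{T15}, which is your ``straightening'' substep in disguise: modify $F$ so that $F=\id$ near an interior point, using smallness of $F-DF(0)$ and connectedness of $\mathrm{GL}^+(n)$), followed by Palais' explicit conjugation formula (Lemma~\ref{T3}): once $H=\id$ on a small ball $B(0,\delta)$, the map $\widetilde{H}=\bigl(\reallywidetilde{H_1\circ\Phi^{-1}\circ H_1^{-1}}\bigr)\circ\Phi$, with $\Phi$ a radial compression onto $B(0,\delta)$, is already the desired global extension, identity outside a neighborhood of $\overbar{B}\cup H(\overbar{B})$. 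The chart-chasing (``sliding'') step appears in both arguments. What the two approaches buy: yours yields the stronger conclusion of an ambient isotopy and is conceptually familiar, but it leans on the isotopy extension theorem, a nontrivial external input requiring integration of time-dependent vector fields; the paper's argument is elementary and self-contained, and---decisively for the rest of the paper---its skeleton transfers verbatim to the bi-Lipschitz and topological categories (Lemma~\ref{T21}), where a usable isotopy extension theorem is not available and only the local linearization step (Lemma~\ref{T19}) requires deep input. Two small points to tidy in your write-up: $\psi$ need not be orientation preserving a priori (precompose with a reflection, or note that $e_0$ and $e_1$ merely need the same orientation character so that $De_0(0)$ and $De_1(0)$ lie in the same component of $\mathrm{GL}(n)$); and in the straightening step you must keep the images of the linear isotopy inside the chart, which requires a preliminary rescaling.
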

While in \Cref{Q1} we do not require $F$ to be orientation preserving, we can always assume it, because if $F$ is orientation reversing, it will be orientation preserving after composing it with a reflection. However, in the case of manifolds the assumption about preservation of the orientation is necessary, because as pointed out by Palais \cite[Footnote~p.~276]{Palais}, there are oriented manifolds $\mathcal{M}^n$ such that every surjective homeomorphism $F:\mathcal{M}^n\to\mathcal{M}^n$ is orientation preserving. 

\Cref{T9} is so beautiful that it does not require any motivation. However, Palais \cite[Footnote~p.~275]{Palais} mentioned the following one: \Cref{T9} implies that the connected sum of oriented manifolds $\mathcal{M}^n$ and $\mathcal{N}^n$ is independent of the choice of balls whose boundaries we glue, because a ball in $\mathcal{M}^n$ (or in $\mathcal{N}^n$) can be transformed to another ball in $\mathcal{M}^n$ (or in $\mathcal{N}^n$) by a global diffeomorphism of the manifold $\mathcal{M}^n$ ($\mathcal{N}^n$). 

Proving \Cref{T9} boils down to answering \Cref{Q1}, let us sketch the answer here. As we pointed out, we can assume that $F$ is orientation preserving. By translation, we can assume that $F(0)=0$. Then, we can modify $F$ so that $F=\id$ in a neighborhood of $0$ and $F$ remains the original diffeomorphism near the boundary of $B^n(0,1)$, see \Cref{T15}. This \emph{local linearization} is standard and follows from the fact that in a small neighborhood of $0$, the diffeomorphism $F$ is very close to $DF(0)$, a linear isomorphism. Now, as was shown by Palais, we can define the extension of $F$ to a global diffeomorphism of $\bbbr^n$ by an explicit and elementary formula, see \Cref{T3}.

Here is why this result is unintuitive: The fact that $F=\id$ in a neighborhood of $0$ pertains to the interior of the ball, far from the boundary. Surprisingly, this information helps in constructing an extension of $F$, which could be perceived as a statement only about boundary values of $F$.

Having established the answer to \Cref{Q1}, one is prompted to ask about its analogue for less regular mappings:
\begin{question}
\label{Q2}
Suppose that $F:B^n(0,1)\to\bbbr^n$ is a homeomorphism (bi-Lipschitz homeomorphism) onto its image. Assume also that $F$ can be extended to a homeomorphism (bi-Lipschitz homeomorphism) of some neighborhood of $\overbar{B}^n(0,1)$. Can we extend $F$ to a homeomorphism (bi-Lipschitz homeomorphism) of $\bbbr^n$ onto $\bbbr^n$?
\end{question}
The answer is still ``yes''. Clearly, we can assume that $F(0)=0$, and the proof will be the same as before as long as we can modify $F$ so that $F=\id$ in a neighborhood of $0$. However, this time the problem of finding a local linearization of $F$, i.\,e., modifying $F$ so that $F=\id$ in a neighborhood of $0$ is {\em very} difficult and related to the deep stable homeomorphism and annulus theorems. These theorems are discussed later in this introduction.

The aim of this paper is to popularize the method of Palais and to extend it to the class of homeomorphisms and bi-Lipschitz homeomorphisms. While the Cerf-Palais Theorem \Cref{T9} is known to those who work with geometric topology, we believe that it should be known to those who use diffeomorphisms and are not necessarily very familiar with geometric topology. Moreover, the extensions of \Cref{T9} to the class of homeomorphisms and especially bi-Lipschitz homeomorphisms seem to be much less known. 

The main results of the paper are Theorems~\ref{T1},~\ref{T2} and~\ref{T8}. In \Cref{T1} we generalize \Cref{T9} to the case where instead of extending a diffeomorphism defined on one set diffeomorphic to a ball, we glue diffeomorphisms that are defined on many such sets. Then in Theorems~\ref{T2} and~\ref{T8} we extend \Cref{T1} to the cases of bi-Lipschitz homeomorphisms and homeomorphisms, respectively.

In order to state the results, we need to
fix notation. Throughout the paper we assume that $n\geq 2$. We denote by $\bbbn$ the set of all positive integers. The interior of a set $A$ will be denoted by $\mathring{A}$, the closure by $\bar{A}$. By a domain we mean an open and connected set. Surjective mappings will be denoted by $F:X\twoheadrightarrow Y$, i.\,e., $F(X)=Y$.
Open balls in the Euclidean $n$-space $\bbbr^n$ will be denoted by $B^n(x,r)$ or by $B(x,r)$ if the dimension is clear.
The unit ball and the unit sphere in $\mathbb{R}^n$ will be denoted by $\bbbb^n:=B^n(0,1)$ and $\bbbs^{n-1}:=\partial\bbbb^n$.

Let $\mathcal{M}^n$ and $\mathcal{N}^n$ be two $n$-dimensional manifolds and let $\Omega\subset \mathcal{M}^n$ be open. By a~homeomorphism $f:\Omega\to \mathcal{N}^n$ we mean a~homeomorphism onto the image, i.\,e., a~homeomorphism between $\Omega$ and $f(\Omega)$.
Similar convention applies to diffeomorphisms.

If $A\subset\mathcal{M}^n$ is closed, then by a $C^k$-diffeomorphism $F:A\to\mathcal{N}^n$ we mean a mapping that extends to a $C^k$-diffeomorphism on an open neighborhood of $A$. Finally, a closed set $D\subset\mathcal{M}^n$ is a {\em $C^k$-closed ball} if there is a $C^k$-diffeomorphism $F:\overbar{\bbbb}^n\twoheadrightarrow D\subset\mathcal{M}^n$. In other words, if there is a $C^k$-diffeomorphism $F:B^n(0,1+\eps)\to\mathcal{M}^n$ such that $F(\overbar{B}^n(0,1))=D$.

\begin{theorem}
\label{T1}
Let $\mathcal{M}^n$ be an $n$-dimensional connected and oriented manifold of class $C^k$, $k\in\bbbn\cup\{\infty\}$. Suppose that $\{{D}_i\}_{i=1}^\ell$ and $\{{D}'_i\}_{i=1}^\ell$, ${D}_i,{D}'_i\subset\mathcal{M}^n$, are two families of pairwise disjoint $C^k$-closed balls. If $F_i:{D}_i\twoheadrightarrow {D}'_i$, $i=1,2,\ldots,\ell$, are orientation preserving $C^k$-diffeomorphisms, then there is a $C^k$-diffeomorphism $F:\mathcal{M}^n\twoheadrightarrow\mathcal{M}^n$ such that $F|_{D_i}=F_i$. Moreover, if $D_i$ and $D'_i$ for all $i = 1, \ldots, \ell$ are contained in a domain $U \subset \mathcal{M}^n$, $F$ can be chosen to equal identity outside $U$.
\end{theorem}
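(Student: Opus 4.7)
The plan is to prove Theorem~\ref{T1} by induction on $\ell$, incorporating the ``moreover'' support condition into the induction from the outset. Since every connected oriented manifold is itself a domain, taking $U=\mathcal{M}^n$ makes the support condition vacuous, so the strengthened statement subsumes the basic one and it suffices to prove: for every connected open $U\subset\mathcal{M}^n$ containing all the $D_i$ and $D'_i$, there is an $F$ with $F|_{D_i}=F_i$ and $F\equiv\id$ outside $U$.

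\emph{Base case $\ell=1$.} Given $D_1,D'_1\subset U$ and an orientation-preserving $C^k$-diffeomorphism $F_1\colon D_1\twoheadrightarrow D'_1$, I would first enclose the compact set $D_1\cup D'_1$ in a single $C^k$-closed ball $B\subset U$ with $D_1\cup D'_1\subset\mathring{B}$: using connectedness of $U$, join $D_1$ to $D'_1$ by a $C^k$-embedded arc in $U$, take a thin tubular neighborhood of this arc together with slight thickenings of $D_1$ and $D'_1$, and smooth corners to produce a ``dumbbell-shaped'' $C^k$-closed ball. Identifying $B\cong\overline{\bbbb}^n$, the task reduces to extending a diffeomorphism between two sub-balls of $\mathring{\bbbb}^n$ to a self-diffeomorphism of $\overline{\bbbb}^n$ equal to the identity on $\bbbs^{n-1}$. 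After translating so that $F_1$ fixes the origin, local linearization (\Cref{T15}) makes $F_1$ equal to the identity on a small ball $B^n(0,\rho)$ without altering it on $D_1$, and Palais' explicit formula (\Cref{T3}) then produces the required self-diffeomorphism of $\overline{\bbbb}^n$. Extending by the identity outside $B$ yields the desired $F\colon\mathcal{M}^n\twoheadrightarrow\mathcal{M}^n$.

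\emph{Inductive step.} Assuming the strengthened statement for $\ell-1$ pairs, apply the hypothesis to the first $\ell-1$ pairs with domain $U$ to obtain $G\colon\mathcal{M}^n\twoheadrightarrow\mathcal{M}^n$ with $G|_{D_i}=F_i$ for $i<\ell$ and $G\equiv\id$ outside $U$. Set $\tilde D_\ell:=G(D_\ell)$. Because $G$ is a bijection and $\{D_i\}$ is pairwise disjoint, $\tilde D_\ell\cap D'_i=G(D_\ell\cap D_i)=\emptyset$ for each $i<\ell$, and $D'_\ell$ is disjoint from those same balls by the pairwise disjointness of $\{D'_i\}$. Therefore $\tilde D_\ell\cup D'_\ell\subset V:=U\setminus(D'_1\cup\dots\cup D'_{\ell-1})$, and $V$ is itself a domain: for $n\geq 2$, removing finitely many disjoint closed balls from a connected open subset of $\mathcal{M}^n$ leaves a connected set. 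Apply the base case to the single pair $(\tilde D_\ell,D'_\ell)$ with diffeomorphism $F_\ell\circ G^{-1}|_{\tilde D_\ell}$ and enveloping domain $V$, obtaining $H\colon\mathcal{M}^n\twoheadrightarrow\mathcal{M}^n$ equal to $F_\ell\circ G^{-1}$ on $\tilde D_\ell$ and to the identity outside $V$---in particular on each $D'_i$ ($i<\ell$) and outside $U$. Then $F:=H\circ G$ satisfies $F|_{D_\ell}=F_\ell$, $F|_{D_i}=H\circ F_i=F_i$ for $i<\ell$, and $F\equiv\id$ outside $U$, completing the induction.

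The principal obstacle is the base case: arranging Palais' extension inside the enveloping ball $B$ to equal the identity on $\partial B$ requires direct use of the explicit formula from \Cref{T3}, not merely the qualitative existence statement \Cref{T9}, and the dumbbell construction of $B\subset U$ is geometrically natural but technically fiddly at finite $C^k$ regularity. Once these are in hand, the inductive step is largely bookkeeping, and the auxiliary facts used---that diffeomorphisms preserve disjointness, and that complements of finitely many disjoint closed balls in a connected open set remain connected when $n\geq 2$---are standard.
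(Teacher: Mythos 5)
Your inductive scheme is sound in its outer structure---the passage from $\ell-1$ to $\ell$ via $F=H\circ G$ is correct, and reducing to a single pair of balls is legitimate---but the base case contains a genuine gap. You assume that $D_1\cup D'_1$ can be enclosed in a single $C^k$-closed ball $B\subset U$. The theorem does not assume $D_1\cap D'_1=\varnothing$ (see the remark immediately after \Cref{T1}), and when the two balls intersect their union can be homotopically nontrivial in $U$, in which case no such $B$ exists. Concretely, take $\mathcal{M}^2=U=\bbbr^2\setminus\{0\}$, let $D_1$ be a round disk centered at $(1,0)$, and let $D'_1$ be a thin tubular neighborhood of the circular arc $\{e^{i\theta}:\theta\in[0.1,\,2\pi-0.1]\}$ with both ends inside $D_1$. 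Both are $C^\infty$-closed balls and there is an orientation preserving diffeomorphism $F_1:D_1\twoheadrightarrow D'_1$, yet $D_1\cup D'_1$ contains a loop that is noncontractible in $U$, so it is not contained in any topological disk in $U$; the dumbbell cannot be built. The same difficulty infects the inductive step, because the pair $(\tilde D_\ell,D'_\ell)$ you feed back into the base case may likewise intersect, so you cannot simply restrict the base case to disjoint pairs.

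The repair is the route the paper takes: do not try to engulf $D_1\cup D'_1$ in place. Fix one small $C^k$-closed ball $K\subset U$ and construct \emph{two separate} diffeomorphisms $H,H':\mathcal{M}^n\twoheadrightarrow\mathcal{M}^n$, each equal to the identity outside $U$, with $H(D_1)\subset\mathring K$ and $H'(D'_1)\subset\mathring K$ (shrink each ball toward an interior point and flow that point along an arc into $K$ via a compactly supported vector field). Then extend the conjugated map $H'\circ F_1\circ H^{-1}$ inside $K$ by the Euclidean argument (\Cref{T4}, \Cref{T5}, \Cref{T7}) and set $F=(H')^{-1}\circ\widehat F\circ H$. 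With this replacement your induction would close; in fact, since the sources and targets are each pairwise disjoint families, all $\ell$ pairs can be pushed into $K$ simultaneously by this method, which is why the paper dispenses with the induction altogether. Your remaining auxiliary claims (connectedness of $U$ minus finitely many disjoint flat closed balls for $n\geq 2$, and the bookkeeping $F|_{D_i}=H\circ F_i=F_i$ for $i<\ell$) are fine.
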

\begin{remark}
We do not assume that the balls in the family $\{{D}_i\}_{i=1}^\ell$ are disjoint from the balls in the family $\{{D}'_i\}_{i=1}^\ell$.
\end{remark}

Let us stress that Theorem~\ref{T1} for $\ell=1$ coincides with the Cerf-Palais \Cref{T9}. In that case the result is about extending diffeomorphisms, while the case $\ell\geq 2$ is about gluing them. Our proof is based on Palais' argument (see Lemma~\ref{T3}).

Among the many possible corollaries based on Theorem~\ref{T1}, below we state one in the Euclidean setting, which is useful due to its elementary statement.
\begin{corollary}
\label{T13}
Let $\Omega\subset\bbbr^n$ be open and let $D_1$ and $D_2$ be $C^k$-closed balls, $k\in\bbbn\cup\{\infty\}$, such that $D_2\subset\mathring{D}_1\subset D_1\subset\Omega$. If $F:\Omega\to\bbbr^n$ and $G:D_2\to\bbbr^n$ are orientation preserving diffeomorphisms satisfying $G(D_2)\subset F(\mathring{D}_1)$, then there is a $C^k$-diffeomorphism $H:\Omega\twoheadrightarrow F(\Omega)$ that agrees with $F$ on $\Omega\setminus \mathring{D}_1$ and with $G$ on $D_2$.
\end{corollary}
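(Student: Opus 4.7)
The plan is to reduce to Theorem~\ref{T1} applied on the connected oriented open set $\mathcal{M}^n := F(\mathring{D}_1) \subset \bbbr^n$, regarded as a $C^\infty$-manifold. Note that $F(D_2)$ is a $C^k$-closed ball in $\mathcal{M}^n$ (compose the defining parametrization of $D_2$ with $F$), and $G(D_2) \subset \mathcal{M}^n$ is a $C^k$-closed ball by hypothesis together with the convention that $G$ extends diffeomorphically to a neighborhood of $D_2$. The composition
\[
\Phi := G \circ (F|_{D_2})^{-1} \colon F(D_2) \twoheadrightarrow G(D_2)
\]
is then an orientation-preserving $C^k$-diffeomorphism between two $C^k$-closed balls in $\mathcal{M}^n$.

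Next, I would choose a subdomain $U \subset \mathcal{M}^n$ containing $F(D_2) \cup G(D_2)$ and with $\overbar{U}$ compact in $\mathcal{M}^n$. Such a $U$ exists because the union is a compact subset of the open connected set $\mathcal{M}^n$ and can be engulfed by a finite chain of small open Euclidean balls joined by thin tubes, all lying in $\mathcal{M}^n$. Applying Theorem~\ref{T1} with $\ell=1$ to $\mathcal{M}^n$, the balls $F(D_2), G(D_2)$ and the diffeomorphism $\Phi$, together with the moreover clause for this $U$, yields a $C^k$-diffeomorphism $\Psi \colon \mathcal{M}^n \twoheadrightarrow \mathcal{M}^n$ satisfying $\Psi|_{F(D_2)} = \Phi$ and $\Psi \equiv \id$ on $\mathcal{M}^n \setminus U$.

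Finally, define $H \colon \Omega \to F(\Omega)$ by
\[
H(x) := \begin{cases} \Psi(F(x)) & \text{if } x \in \mathring{D}_1, \\ F(x) & \text{if } x \in \Omega \setminus \mathring{D}_1. \end{cases}
\]
Because $\overbar{U}$ is compactly contained in $F(\mathring{D}_1)$, the preimage $F^{-1}(\overbar{U})$ is compactly contained in $\mathring{D}_1$, so $\Psi \circ F$ coincides with $F$ on an open neighborhood of $\partial D_1$ inside $\mathring{D}_1$; hence the two pieces defining $H$ agree on a full neighborhood of $\partial D_1$, and $H$ is $C^k$ on all of $\Omega$. Now $\Psi \circ F$ is a bijection $\mathring{D}_1 \twoheadrightarrow F(\mathring{D}_1)$ while $F|_{\Omega \setminus \mathring{D}_1}$ bijects onto $F(\Omega \setminus \mathring{D}_1)$; these image sets are disjoint by injectivity of $F$ and together cover $F(\Omega)$, so $H$ is a $C^k$-diffeomorphism of $\Omega$ onto $F(\Omega)$. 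On $D_2$ one has $H(x) = \Phi(F(x)) = G(x)$, and $H = F$ on $\Omega \setminus \mathring{D}_1$ by construction.

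The only real subtlety is smoothness of the gluing along $\partial D_1$, which is precisely what the \emph{moreover} clause of Theorem~\ref{T1} delivers: by choosing the auxiliary domain $U$ with closure strictly inside $F(\mathring{D}_1)$, the diffeomorphism $\Psi$ is forced to equal the identity near the boundary of $F(\mathring{D}_1)$, which is exactly the condition needed to paste $\Psi \circ F$ to $F$ across $\partial D_1$.
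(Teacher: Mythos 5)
Your argument is correct and is essentially the paper's, just run on the image side rather than the domain side: the paper applies the $\ell=1$ gluing result (Corollary~\ref{T6}, with $U=\mathring{D}_1$) to $F^{-1}\circ G\colon D_2\to\mathring{D}_1$, obtains $\Phi\colon\bbbr^n\twoheadrightarrow\bbbr^n$ equal to $F^{-1}\circ G$ on $D_2$ and to the identity outside $\mathring{D}_1$, and sets $H:=F\circ\Phi$. This domain-side conjugation makes $H$ a single composition and removes the need to choose an auxiliary relatively compact domain inside $F(\mathring{D}_1)$, but the underlying mechanism is identical to yours.
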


Let $D_1=\overbar{B}^n(0,2)$ and $D_2=\overbar{B}^n(0,1)$. If $F:\bbbr^n\setminus\mathring{D}_1\twoheadrightarrow\bbbr^n\setminus \mathring{D}_1$ is an orientation preserving diffeomorphism and $G=\id$ on $D_2$, one could expect a similar conclusion as in Corollary~\ref{T13}: existence of a diffeomorphism $H:\bbbr^n\twoheadrightarrow\bbbr^n$ that agrees with $F$ on $\bbbr^n\setminus\mathring{D}_1$ and with $G$ on $D_2$. However, Milnor's theorem, \cite[Theorem~5]{milnor}, again yields a counterexample when $n=7$ (see Example~\ref{T16}).

We say that a closed set $D\subset\mathcal{M}^n$ is a {\em bi-Lipschitz closed ball}, if there is a bi-Lipschitz homeomorphism $F:\overbar{\bbbb}^n\twoheadrightarrow  {D}$. We say that $D$ is a  {\em flat bi-Lipschitz closed ball} if there is a bi-Lipschitz homeomorphism $F:\overbar{\bbbb}^n\twoheadrightarrow  {D}$ that can be extended to a bi-Lipschitz map on a neighborhood of $\overbar{\bbbb}^n$. Not every bi-Lipschitz ball is flat, as the example of the bi-Lipschitz Fox-Artin ball shows; see e.\,g. \cite{gehring}, \cite[Theorem~7.2.1]{GMP}, \cite[Theorem~3.7]{martin}.
For that reason in the next result we need to assume flatness of the balls.

\begin{theorem}
\label{T2}
Let $\mathcal{M}^n$ be an $n$-dimensional connected and oriented Lipschitz manifold.
Suppose that $\{{D}_i\}_{i=1}^\ell$ and $\{{D}'_i\}_{i=1}^\ell$, ${D}_i,{D}'_i\subset\mathcal{M}^n$, are two families of pairwise disjoint flat bi-Lipschitz closed balls. If $F_i:{D}_i\twoheadrightarrow {D}'_i$, $i=1,2,\ldots,\ell$,  are orientation preserving bi-Lipschitz homeomorphisms, then there is a bi-Lipschitz homeomorphism $F:\mathcal{M}^n \twoheadrightarrow \mathcal{M}^n$ such that $F|_{D_i}=F_i$. 
Moreover, if $D_i$ and $D'_i$ for all $i = 1, \ldots, \ell$ are contained in a domain $U \subset \mathcal{M}^n$, $F$ can be chosen to equal identity outside $U$.
\end{theorem}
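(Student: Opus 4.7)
The plan is to follow the strategy from the smooth case \Cref{T1}, which is built on Palais' gluing lemma \Cref{T3}, and to upgrade each ingredient to the bi-Lipschitz setting. Most of the passage is a routine adaptation; the single genuinely hard step is the bi-Lipschitz local linearization near a point.

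First, I would reduce to a Euclidean single-ball problem. Each flat bi-Lipschitz closed ball $D_i$ comes, by definition, with a bi-Lipschitz parametrization defined on a neighborhood of $\overbar{\bbbb}^n$, so I can pull each $F_i$ back through bi-Lipschitz charts and work, after such a conjugation, with balls in $\bbbr^n$. Using connectedness of $\mathcal{M}^n$ (and, if relevant, of $U$), I can further move all $D_i, D'_i$ into a common chart by bi-Lipschitz isotopies supported in small tubes along paths; such isotopies are standard in the bi-Lipschitz category and can be arranged to be the identity outside $U$. Thus the heart of the matter is the case $\ell=1$ with $D_1, D'_1 \subset \bbbr^n$. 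The general $\ell\geq 2$ case then follows by induction: glue one pair of balls at a time and, at each stage, choose the supporting region disjoint from the remaining balls, so that the partial global map is not disturbed on the balls still to be handled.

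For the single-ball problem, after translating so that $F(0)=0$, I would invoke Palais' formula from \Cref{T3} verbatim, \emph{provided} that $F$ has been arranged to equal the identity on some small $\overbar{B}^n(0,r)$ while being unchanged near $\bbbs^{n-1}$. The formula in \Cref{T3} is radial and piecewise assembled from $F$, $F^{-1}$, and dilations; on each compact spherical shell it is therefore a composition of bi-Lipschitz maps with explicitly controlled constants, and one only has to verify that the bi-Lipschitz bounds match up across the finitely many interfaces between shells. This is a direct computation with Lipschitz constants that parallels the smooth case.

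The main obstacle — and the reason the result is qualitatively deeper than \Cref{T1} — is the \emph{bi-Lipschitz local linearization}: producing, from an arbitrary bi-Lipschitz $F$ fixing the origin, a bi-Lipschitz map that agrees with $F$ near $\bbbs^{n-1}$ and with $\id$ on some $\overbar{B}^n(0,r)$. In the smooth category this is elementary because $F$ is $C^1$-close to $DF(0)$ near $0$ and the linear map $DF(0)$ can be cut off to the identity by an explicit isotopy. In the bi-Lipschitz category $F$ need not be differentiable at $0$, there is no canonical linear model, and the existence of such a modification is exactly the content of the bi-Lipschitz version of the stable homeomorphism / annulus theorem mentioned in the introduction. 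I expect the bulk of the rigorous proof to consist in deploying this theorem, while everything else — charts, isotopic motion of balls into a common chart, the radial Palais formula, and the induction on $\ell$ — is a direct bi-Lipschitz analogue of the smooth argument.
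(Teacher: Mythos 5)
Your overall strategy is the paper's: reduce to the Euclidean setting by moving all balls into a common chart via bi-Lipschitz maps supported near chains of coordinate neighborhoods, run Palais' formula verbatim in the bi-Lipschitz category (compositions and finite gluings of bi-Lipschitz maps along compact interfaces are bi-Lipschitz, so this part is indeed routine, cf.\ \Cref{T21}), and isolate the local linearization as the one deep ingredient, resting on Sullivan's bi-Lipschitz stable homeomorphism theorem and the bi-Lipschitz annulus theorem. One caveat here: the local linearization is not ``exactly the content'' of those theorems. The stable homeomorphism theorem only linearizes a map of a ball \emph{onto itself} (\Cref{T18}), whereas your $F$ maps $\overbar{B}(0,\varrho)$ onto some other flat ball; one still needs the annulus theorem to identify the region between $F(\partial B(0,\varrho/2))$ and a small round sphere with a standard annulus, plus radial extensions of the two boundary maps, to assemble the modification (\Cref{T19}, following Tukia--V\"ais\"al\"a). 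That is a genuine intermediate argument, not just a citation.

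The more serious issue is your induction on $\ell$ (``glue one pair at a time, choosing the supporting region disjoint from the remaining balls''). The theorem does \emph{not} assume the family $\{D_i\}$ is disjoint from the family $\{D'_j\}$, and the Palais extension of $F_1$ is the identity only outside an $\eps$-neighborhood of $D_1\cup D'_1$ (this is sharp; see the remark after \Cref{T3}). If $D'_1$ meets $D_2$, no admissible supporting region avoids $D_2$, so the stage-one map disturbs the data on the balls still to be handled, and the claimed disjointness of supports is unattainable. The paper avoids this by treating all balls simultaneously: first shrink every $D_i$ onto a tiny ball $\overbar{B}(p_i,\eps)$ and every $D'_i$ onto $\overbar{B}(q_i,\eps)$ (\Cref{T23}), then move the $q_i$-balls onto the $p_i$-balls (\Cref{T5}); the conjugated maps then send tiny balls onto themselves, and their Palais extensions have genuinely disjoint small supports. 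Your argument needs this device (or equivalent bookkeeping that corrects each $F_i$ by the already-constructed partial map) to reach the stated generality.
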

\begin{remark}
Let us note that it follows from the Sch\"onflies theorem for Lipschitz maps \cite[Theorem A]{Tukia} that on the plane every bi-Lipschitz closed ball is flat, and hence the assumption on bi-Lipschitz flatness is superfluous for $n = 2$, see also \cite{DaneriPratelli, kovalev} for recent related results.
For a geometric characterization of flat bi-Lipschitz balls in $\bbbr^n$, $n\geq 2$, see Remark~\ref{R1}.
\end{remark}

While the overall strategy for Theorem~\ref{T2} is similar to that for Theorem~\ref{T1}, the problem is much deeper, because it requires 
the {\em bi-Lipschitz stable homeomorphism theorem} proved by Sullivan~\cite{Sullivan} 
(see Lemma~\ref{T26} below).
This is a very difficult result, but in dimensions $n\leq 3$ there is a simpler proof \cite{Tukia}, \cite[Theorem~3.2]{Vaisala77}. Unfortunately, the proof in the case $n\geq 4$ lacks details. However, Tukia and V\"ais\"al\"a  \cite{TukiaV} provided a detailed proof, except for the existence of Sullivan manifolds. Kapovich \cite{hajlaszMO}, \cite[Remark 12]{Kapovich} clarified how the existence of Sullivan's manifolds follows from Okun’s work \cite{okun}.

We say that a closed set $D\subset\mathcal{M}^n$ is a {\em topological closed ball}, if there is a homeomorphism $F:\overbar{\bbbb}^n\twoheadrightarrow  {D}$. We say that $D$ is a  {\em flat topological closed ball} if there is a homeomorphism $F:\overbar{\bbbb}^n\twoheadrightarrow  {D}$ that can be extended to a homeomorphism on a neighborhood of $\overbar{\bbbb}^n$ (see also Remark~\ref{R1}). Examples of wild spheres (the Alexander horned sphere) show that not every closed topological ball is flat, so, as in the bi-Lipschitz case, we need to assume flatness of balls.

\begin{theorem}
\label{T8}
Let $\mathcal{M}^n$ be an $n$-dimensional connected and oriented topological manifold.
Suppose that $\{{D}_i\}_{i=1}^\ell$ and $\{{D}'_i\}_{i=1}^\ell$, ${D}_i,{D}'_i\subset\mathcal{M}^n$, are two families of pairwise disjoint flat topological closed balls. If $F_i:{D}_i\twoheadrightarrow {D}'_i$, $i=1,2,\ldots,\ell$,  are orientation preserving homeomorphisms, then there is a homeomorphism $F:\mathcal{M}^n\twoheadrightarrow\mathcal{M}^n$ such that $F|_{D_i}=F_i$. Moreover, if $D_i$ and $D'_i$ for all $i = 1, \ldots, \ell$ are contained in a domain $U \subset \mathcal{M}^n$, $F$ can be chosen to equal identity outside $U$.
\end{theorem}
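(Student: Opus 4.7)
\textbf{Plan for Theorem~\ref{T8}.}

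The plan is to follow the same three-step strategy used for Theorems~\ref{T1} and~\ref{T2}, with the topological Stable Homeomorphism and Annulus Theorems (Kirby for $n\geq 5$, Quinn for $n=4$, elementary for $n\leq 3$) replacing the smooth and bi-Lipschitz linearizations. First, I would reduce to a Euclidean chart. Using connectedness of $\mathcal{M}^n$ together with flatness of each $D_i$ and $D_i'$, I would enlarge every pair $(D_i,D_i')$ to a flat topological ball sitting inside a single chart homeomorphic to $\bbbr^n$ and contained in $U$, chosen so that the enlarged balls corresponding to distinct indices are pairwise disjoint. The standard fact that the compactly supported homeomorphism group of a connected manifold acts transitively on finite tuples of interior points (which itself can be bootstrapped from the $\ell = 1$ case of the theorem applied to auxiliary small flat balls) is used to move the pairs into convenient position without disturbing the others; pairwise disjointness within each of the families $\{D_i\}$ and $\{D_i'\}$ makes this combinatorially possible.

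Once reduced to a Euclidean chart, the heart of the argument---and the main obstacle---is the topological local linearization: given an orientation-preserving homeomorphism $F$ on a neighborhood of $\overbar{\bbbb}^n$ with $F(0)=0$, modify $F$ into a homeomorphism $\tilde F$ that equals $F$ near $\partial\bbbb^n$ and equals $\id$ in a neighborhood of $0$. In the smooth case (Lemma~\ref{T15}) this was immediate from the linear approximation $F\approx DF(0)$; in the bi-Lipschitz case it required Sullivan's theorem (Lemma~\ref{T26}). In the topological category, I would invoke the Stable Homeomorphism Theorem, which says that every orientation-preserving homeomorphism of $\bbbr^n$ is a finite composition of homeomorphisms each equal to the identity on a nonempty open set; combined with the Annulus Theorem this produces a homeomorphism of a spherical annulus $\overbar{B}^n(0,r_1)\setminus B^n(0,r_2)$ that interpolates between $F$ on the outer sphere and $\id$ on the inner one. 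Gluing this interpolation to $F$ outside the annulus and to $\id$ inside yields $\tilde F$.

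Third, with $\tilde F=\id$ near $0$ and $\tilde F = F$ near $\partial\bbbb^n$, Palais' extension (Lemma~\ref{T3}) applies verbatim: its formula uses only radial scalings and compositions, never derivatives or Lipschitz bounds, so its proof goes through in the topological category and produces a homeomorphism of $\bbbr^n$ onto itself that agrees with $\tilde F$ on $\overbar{\bbbb}^n$ and with $\id$ outside a larger ball. Pulling this back through the flat chart and assembling the resulting compactly supported homeomorphisms for the several pairs---which have disjoint supports by the first reduction---produces a homeomorphism $F:\mathcal{M}^n\twoheadrightarrow\mathcal{M}^n$ with $F|_{D_i}=F_i$, equal to $\id$ outside $U$ when the hypothesis of the last sentence holds. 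The only genuinely hard step is the local linearization; the remaining steps are a straightforward translation of the smooth/bi-Lipschitz arguments into the topological language.
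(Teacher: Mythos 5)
Your overall strategy coincides with the paper's: reduce to a Euclidean chart, linearize locally at a fixed point using the stable homeomorphism theorem together with the annulus theorem (this is exactly Lemma~\ref{T19}, built from Lemma~\ref{T18} and Lemma~\ref{T20}), and then run Palais' construction verbatim in the topological category (Lemma~\ref{T21}). Your identification of the local linearization as the only hard step, and the observation that Palais' formula uses no derivatives or Lipschitz bounds, are both correct.

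There is, however, a concrete gap in your first reduction. You propose to enclose each pair $(D_i,D_i')$ in a flat ball so that the enclosing balls for distinct indices are pairwise disjoint, and you later rely on this to glue the $\ell$ local extensions (``which have disjoint supports by the first reduction''). But the hypotheses only make the $D_i$ pairwise disjoint among themselves and the $D_i'$ pairwise disjoint among themselves; the theorem explicitly permits $D_i\cap D_j'\neq\varnothing$ for $i\neq j$, in which case no such disjoint enclosing balls exist and the pairing collapses. The paper avoids this by treating sources and targets separately: one ambient homeomorphism $H$ gathers all the $D_i$ into a fixed flat ball $K\subset U$, a second one $H'$ independently gathers all the $D_i'$ into $K$, and inside $K$ the balls $H(D_i)$ and $H'(D_i')$ are identified with tiny round balls about points (Lemma~\ref{T23} combined with Lemma~\ref{T5}); the conjugated maps then become orientation preserving self-homeomorphisms of disjoint small round balls, which Corollary~\ref{T22} extends with disjoint supports before gluing. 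Your argument becomes correct once the ``disjoint enclosing balls for pairs'' step is replaced by this two-sided gathering and shrinking. A smaller point: on a topological manifold one cannot invoke $1$-parameter flows directly to move the balls; the paper does this through chains of coordinate charts, which makes precise your appeal to transitivity of the compactly supported homeomorphism group.
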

The proof of this result requires the stable homeomorphism theorem, see Lemma~\ref{T26}. For $n=2,3$ it was proved by Rad\'{o} \cite{Rado} and Moise \cite{Moise}, respectively; then for $n > 4$ by Kirby \cite{kirby} and, lastly, for $n=4$ by Quinn \cite{Quinn}.

In such topological problems as Theorems~\ref{T2} and \ref{T8}, the dimension $n=4$ is often doubtful, but the results are also true for $n=4$. 

Below we state an analogue of Corollary~\ref{T13} in the bi-Lipschitz and in the homeomorphic case.
\begin{corollary}
\label{T28}
Let $\Omega\subset\bbbr^n$ be open and let $D_1$ and $D_2$ be flat topological (or flat bi-Lipschitz) closed balls such that $D_2 \subset \mathring{D}_1 \subset D_1 \subset \Omega$. If $F:\Omega\to\bbbr^n$ and $G:D_2\to\bbbr^n$ are orientation preserving (bi-Lipschitz) homeomorphisms satisfying $G(D_2)\subset F(\mathring{D}_1)$, then there is a~(bi-Lipschitz) homeomorphism $H:\Omega\twoheadrightarrow F(\Omega)$ that agrees with $F$ on $\Omega\setminus \mathring{D}_1$ and with $G$ on $D_2$.
\end{corollary}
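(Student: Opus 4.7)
The plan is to deduce both the topological and the bi-Lipschitz version of the statement from Theorems~\ref{T8} and~\ref{T2}, respectively, applied inside the target manifold $F(\Omega)$. By handling connected components of $\Omega$ separately and setting $H=F$ on every component that does not meet $D_1$, I may assume $\Omega$ is a domain; then $F(\Omega)$ is an open subset of $\bbbr^n$ (invariance of domain), hence a connected, oriented topological (resp.\ Lipschitz) manifold, on which the theorems apply.

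Inside $F(\Omega)$ I consider the two closed sets $E := F(D_2)$ and $E' := G(D_2)$. Both are flat topological (resp.\ flat bi-Lipschitz) closed balls: $D_2$ is such a ball by hypothesis, and by the paper's convention both $F$ and $G$ extend to homeomorphisms (resp.\ bi-Lipschitz homeomorphisms) on a neighborhood of $D_2$, so composing a flat parametrization $\overbar{\bbbb}^n\twoheadrightarrow D_2$ with either $F$ or $G$ yields a flat parametrization of $E$ or $E'$. The composition $\Psi := G\circ F^{-1}: E \twoheadrightarrow E'$ is then an orientation preserving (bi-Lipschitz) homeomorphism that extends to a neighborhood of $E$. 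Moreover, both $E$ and $E'$ lie in the subdomain $U := F(\mathring{D}_1) \subset F(\Omega)$, which is connected because $\mathring{D}_1$ is homeomorphic to $\mathring{\bbbb}^n$. The inclusion $E \subset U$ follows from $D_2 \subset \mathring{D}_1$, while $E' \subset U$ is exactly the hypothesis $G(D_2) \subset F(\mathring{D}_1)$.

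I then apply Theorem~\ref{T8} (resp.\ Theorem~\ref{T2}) with $\mathcal{M}^n = F(\Omega)$, $\ell = 1$, the single-ball families $\{E\}$ and $\{E'\}$, the gluing map $\Psi$, and the enclosing domain $U$. The ``moreover'' clause produces a (bi-Lipschitz) homeomorphism $\Phi: F(\Omega) \twoheadrightarrow F(\Omega)$ with $\Phi|_E = \Psi$ and $\Phi \equiv \id$ on $F(\Omega)\setminus U$. Setting $H := \Phi\circ F$ yields a (bi-Lipschitz) homeomorphism $\Omega\twoheadrightarrow F(\Omega)$: on $\Omega\setminus \mathring{D}_1$ the point $F(x)$ lies outside $U$, whence $H(x)=\Phi(F(x))=F(x)$; on $D_2$ one has $F(x) \in E$, whence $H(x)=\Psi(F(x))=G(F^{-1}(F(x)))=G(x)$.

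The whole argument is a direct repackaging of the two main theorems, so there is no substantive obstacle. The only bookkeeping requiring attention is the verification that $E$ and $E'$ are flat closed balls in $F(\Omega)$ and that $\Psi$ extends to a neighborhood of $E$; both follow immediately from the paper's convention that a (bi-Lipschitz) homeomorphism defined on a closed set always extends to a neighborhood of that set.
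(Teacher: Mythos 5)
Your proof is correct and non-circular (Theorems~\ref{T2} and~\ref{T8} are themselves derived from Corollary~\ref{T24}, not from Corollary~\ref{T28}), but it is the mirror image of the paper's argument and invokes a heavier tool. The paper works on the source side: it applies the Euclidean Corollary~\ref{T24} to the single pair of balls $D_2$ and $(F^{-1}\circ G)(D_2)$ inside the domain $U=\mathring{D}_1\subset\bbbr^n$, obtaining $\Phi:\bbbr^n\twoheadrightarrow\bbbr^n$ equal to $F^{-1}\circ G$ on $D_2$ and to the identity off $\mathring{D}_1$, and sets $H=F\circ\Phi$. You work on the target side, treating $F(\Omega)$ as an abstract connected oriented topological (or Lipschitz) manifold and invoking the full Theorem~\ref{T8} (resp.~\ref{T2}) with $U=F(\mathring{D}_1)$, ending with $H=\Phi\circ F$; your $\Phi$ is precisely the conjugate $F\circ\Phi_{\mathrm{paper}}\circ F^{-1}$ extended by the identity. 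Both routes hinge on the same verifications, namely that the two small balls are flat and lie in the connecting domain and that the correcting map is an orientation preserving (bi-Lipschitz) homeomorphism between them, and both rely on the same tacit convention (stated in the paper only for diffeomorphisms) that $G$, being a homeomorphism on the closed set $D_2$, extends to a neighborhood of $D_2$ --- without this, $G(D_2)$ need not be a flat ball. The paper's source-side version is slightly more economical, since it never leaves $\bbbr^n$ and needs only the Euclidean gluing corollary rather than the chain-of-charts construction carried out in the proofs of the main theorems; your version is equally valid but routes an essentially Euclidean statement through the manifold machinery.
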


The paper is structured as follows. In Section~\ref{trick} we explain the beautiful argument of Palais \cite{Palais}. In Section~\ref{Proof1} we prove Theorem~\ref{T1} and Corollary~\ref{T13}.
The remaining sections are devoted to the proofs of Theorems~\ref{T2} and~\ref{T8}. As mentioned earlier, the main difficulty in these proofs lies in the generalization of the local linearization Lemma~\ref{T15}, which is easy only in the case of diffeomorphisms. For this, we need stable homeomorphism and annulus theorems. In Section~\ref{top} we collect known facts about these theorems in both cases, of homeomorphisms and bi-Lipschitz homeomorphisms. At the end of Section~\ref{top} we prove Lemma~\ref{T19}, which is a counterpart of Lemma~\ref{T15}. After that, the proofs of Theorems~\ref{T2} and~\ref{T8} are almost the same as the proof of Theorem~\ref{T1} and we discuss these proofs in Section~\ref{Proof23}.

\subsection*{Notation} Most of the notation is explained above. We shall also write $B(A, \eps):= \{x: \, \dist(x, A) < \eps\}$.

\subsection*{Acknowledgments} We would like to thank Igor Belegradek for directing us to the paper \cite{Palais} of Palais.

Piotr Hajłasz appreciates the hospitality of the University of Warsaw, where part of this work was conducted. His stay in Warsaw received funding from the
University of Warsaw via the IDUB project (Excellence Initiative Research University) as
part of the Thematic Research Programme \emph{Analysis and Geometry}.

\section{Palais}
\label{trick}

A well known result \cite[Lemma~8.1]{munkres60}, \cite[Theorem~5.5]{Palais1}, says that any orientation preserving $C^k$-diffeomorphism that fixes the origin, can be linearized to identity on a~sufficiently small ball.
\begin{lemma} 
\label{T15}
Suppose that $H:\overbar{B}(0, \varrho) \to \bbbr^n$ is an orientation preserving $C^k$-diffeomor\-phism, $k\in\bbbn\cup\{\infty\}$, with $H(0) = 0$. Then there is a constant $\delta \in (0, \varrho/2)$ and a~$C^k$-diffeomorphism $H_1: \overbar{B}(0, \varrho) \to \bbbr^n$,  such that $H_1 = H$ on $\overbar{B}(0, \varrho) \setminus B(0, \varrho/2)$ and $H_1 = \id$ on $B(0, \delta)$.
\end{lemma}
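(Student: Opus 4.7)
The plan is to set $A:=DH(0)\in GL^+(n)$ (orientation-preservation gives $\det A>0$) and modify $H$ in two successive steps: first, make $H$ equal to the linear map $x\mapsto Ax$ on a small ball around the origin while leaving $H$ untouched outside $B(0,\varrho/2)$; second, replace $Ax$ by the identity on a still smaller ball. For the first step, I would write $H(x)=Ax+R(x)$ where $R(0)=0$ and $DR(0)=0$, so $|R(x)|=o(|x|)$ and $|DR(x)|=o(1)$ (with analogous $C^k$-control on higher derivatives). I would then fix a radial smooth cutoff $\chi\colon\bbbr^n\to[0,1]$ equal to $0$ on $\overbar B(0,r/2)$ and to $1$ off $B(0,r)$, with $|\nabla\chi|\le C/r$, and set
\[
\widehat H(x):=Ax+\chi(x)R(x).
\]
Then $\widehat H=Ax$ on $\overbar B(0,r/2)$ and $\widehat H=H$ on $\overbar B(0,\varrho)\setminus B(0,r)$. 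A direct computation gives $D\widehat H=A+\chi DR+R(\nabla\chi)^T$, whose error terms are of order $o(1)+O(|R|/r)=o(1)$ on $B(0,r)$. Taking $r$ small enough makes $D\widehat H$ uniformly invertible on $B(0,r)$, and a standard matching argument (the image $\widehat H(B(0,r))$ lies in a small neighborhood of $AB(0,r)$, while $H$ is injective on the exterior and these two image sets are disjoint for small $r$) yields global injectivity of $\widehat H$.

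For the second step, it suffices to produce a $C^k$-diffeomorphism $\Phi\colon\bbbr^n\twoheadrightarrow\bbbr^n$ with $\Phi(x)=Ax$ on $\overbar B(0,2\delta)$ and $\Phi=\id$ outside $B(0,r/2)$, for then $H_1:=\widehat H\circ\Phi^{-1}$ satisfies $H_1=\widehat H\circ A^{-1}=\id$ near $0$ and $H_1=\widehat H=H$ on $\overbar B(0,\varrho)\setminus B(0,\varrho/2)$. To build $\Phi$, I would use that $GL^+(n)$ is (smoothly) path-connected: pick a smooth path $t\mapsto A_t$, $t\in[0,1]$, in $GL^+(n)$ with $A_0=I$ and $A_1=A$, and form the time-dependent linear vector field $V_t(x):=\dot A_t A_t^{-1}x$, whose flow from time $0$ is exactly $x\mapsto A_t x$. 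Multiplying $V_t$ by a radial smooth cutoff equal to $1$ on a small ball $B(0,\rho_0)$ and supported in $B(0,r/2)$ yields a compactly supported time-dependent field whose time-$1$ flow is a $C^k$-diffeomorphism of $\bbbr^n$ equal to the identity outside $B(0,r/2)$; for initial data whose trajectory stays inside $B(0,\rho_0)$ for all $t\in[0,1]$, the truncation is invisible and the trajectory agrees with $A_t x$, so at time $1$ it lands at $Ax$. Choosing $\delta$ so small that $A_t\overbar B(0,2\delta)\subset B(0,\rho_0)$ for all $t\in[0,1]$, i.e.\ $2\delta\max_{t\in[0,1]}\|A_t\|<\rho_0$, secures $\Phi(x)=Ax$ on $\overbar B(0,2\delta)$.

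The step I expect to require the most care is Step~2: although all the inputs are elementary, one must carefully coordinate the cutoff radius $\rho_0$, the norm bound $\max_{t\in[0,1]}\|A_t\|$, and the final inner radius $\delta$ so that trajectories of the truncated vector field genuinely remain inside $B(0,\rho_0)$ and there coincide with $A_t x$. Step~1, by contrast, is routine Taylor-plus-cutoff perturbation. As one would expect, the real difficulty in this circle of ideas surfaces only outside the smooth category: the analogous local linearization for bi-Lipschitz homeomorphisms and homeomorphisms (Lemma~\ref{T19}) cannot be done by hand and instead rests on the stable homeomorphism and annulus theorems.
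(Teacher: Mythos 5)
Your proof follows exactly the strategy that the paper indicates for Lemma~\ref{T15} (the paper only sketches it and defers details to \cite{GGG} is wrong---to \cite{GGH}): glue $H$ to its differential $A=DH(0)$ by a cutoff, then carry $A$ to the identity via the flow of a truncated time-dependent linear vector field, using path-connectedness of $GL^+(n)$. Step~2 is correct as you describe it; the only bookkeeping point is that you obtain $H_1=\id$ on $A\big(\overbar{B}(0,2\delta)\cap \overbar{B}(0,r/2)\big)$, which is a neighborhood of $0$ but need not contain $B(0,\delta)$ if $A$ has a small singular value, so the final $\delta$ of the statement must be chosen at the end. This is harmless.

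The one step whose stated justification does not work is the global injectivity of $\widehat H$. You argue that $\widehat H(B(0,r))$ lies in an $o(r)$-neighborhood of $AB(0,r)$ and is therefore disjoint from $H(\overbar{B}(0,\varrho)\setminus B(0,r))$ for small $r$. But the latter set contains $H(\partial B(0,r))$, which also lies within $o(r)$ of $A\,\partial B(0,r)\subset A\overbar{B}(0,r)$; so the two sets you want to separate both meet every $o(r)$-neighborhood of $A\,\partial B(0,r)$, and no choice of small $r$ makes them metrically disjoint. This is precisely the point where the paper warns that ``one also needs some topology to ensure that the resulting mapping is a diffeomorphism and not only a local diffeomorphism.'' The correct argument is topological rather than metric: from $\lip(\widehat H-A)=o(1)$ on the convex set $\overbar{B}(0,r)$ one gets that $\widehat H$ is injective on $\overbar{B}(0,r)$ (not merely a local diffeomorphism); since $\widehat H=H$ on $\partial B(0,r)$, invariance of domain and the Jordan--Brouwer separation theorem show that both $\widehat H(B(0,r))$ and $H(B(0,r))$ coincide with the bounded component of $\bbbr^n\setminus H(\partial B(0,r))$. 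Hence $\widehat H(B(0,r))=H(B(0,r))$, which is disjoint from $H(\overbar{B}(0,\varrho)\setminus B(0,r))$ by injectivity of $H$, and global injectivity of $\widehat H$ follows. With this replacement your proof is complete and matches the paper's intended argument.
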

The idea is to glue $H$ with the linear mapping $DH(0)$ defined in a small neighborhood of $0$ using a suitable partition of unity and then to deform the linear map to the identity on a smaller neighborhood of $0$ using the fact that $DH(0)$ and the identity belong to the same connected component of $GL(n)$. One also needs some topology to ensure that the resulting mapping is a diffeomorphism and not only a local diffeomorphism. For a~detailed proof, we refer to Lemma~3.8 and Lemma~3.10 from \cite{GGH}.

The next surprising result was proved by Palais \cite{Palais}. 
It is the core of the proof of Theorem~\ref{T1}.
\begin{lemma}
\label{T3}
Suppose that $H:\overbar{B}(0, \varrho) \to \bbbr^n$ is an orientation preserving $C^k$-diffeomor\-phism, $k\in\bbbn\cup\{\infty\}$, with $H(0) = 0$. Then for any $\eps > 0$, there is a $C^k$-diffeomorphism $\widetilde{H}:\bbbr^n \twoheadrightarrow \bbbr^n$ such that
\begin{equation}
\label{eq1}
\widetilde{H}(x)=
\begin{cases} 
H(x) &\text{ if } x\in\overbar{B}(0,\varrho),\\
x &\text{ if } \dist(x,A)\geq\eps,
\end{cases}
\end{equation}
where $A=\overbar{B}(0,\varrho)\cup H(\overbar{B}(0,\varrho))$.
\end{lemma}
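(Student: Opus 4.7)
The plan is to first apply Lemma~\ref{T15} to $H$, reducing to a normal form in which the diffeomorphism is the identity on a small ball around the origin, and then to exploit this normal form to construct the desired global extension.  Specifically, I would apply Lemma~\ref{T15} to produce a $C^k$-diffeomorphism $H_1\colon \overbar{B}(0,\varrho)\to\bbbr^n$ that agrees with $H$ on the annulus $\overbar{B}(0,\varrho)\setminus B(0,\varrho/2)$ and equals the identity on $B(0,\delta)$ for some $\delta\in(0,\varrho/2)$.  Because $H$ and $H_1$ coincide on a full collar of $\partial B(0,\varrho)$, they send $\partial B(0,\varrho)$ to the same $C^k$-embedded sphere $S\subset\bbbr^n$; the Jordan--Brouwer theorem then identifies both $H(\overbar{B}(0,\varrho))$ and $H_1(\overbar{B}(0,\varrho))$ with the closure of the bounded component of $\bbbr^n\setminus S$, so they coincide, and in particular $A=\overbar{B}(0,\varrho)\cup H_1(\overbar{B}(0,\varrho))$.

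Next, I would reduce the lemma to extending $H_1$.  It is enough to produce a $C^k$-diffeomorphism $\widehat{H}_1\colon \bbbr^n\twoheadrightarrow\bbbr^n$ with $\widehat{H}_1=H_1$ on $\overbar{B}(0,\varrho)$ and $\widehat{H}_1=\id$ outside $\{x\in\bbbr^n:\dist(x,A)<\eps\}$.  Given such $\widehat{H}_1$, setting $\widetilde{H}=H$ on $\overbar{B}(0,\varrho)$ and $\widetilde{H}=\widehat{H}_1$ on its complement produces the required extension: the two formulas agree to all orders in a collar of $\partial B(0,\varrho)$ since $H=H_1=\widehat{H}_1$ there, so $\widetilde{H}$ is $C^k$; and the equality $\widehat{H}_1(\overbar{B}(0,\varrho))=H(\overbar{B}(0,\varrho))$ from the first step forces $\widetilde{H}$ to be a global bijection, because it sends $\overbar{B}(0,\varrho)$ onto $H(\overbar{B}(0,\varrho))$ and its complement onto the complement of this image.

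The main obstacle, and the heart of Palais's contribution, is producing $\widehat{H}_1$.  The counterintuitive point, emphasized in the introduction, is that the purely local information ``$H_1=\id$ on $B(0,\delta)$'', which lies deep inside $\overbar{B}(0,\varrho)$, is exactly what permits a global extension that is trivial outside the $\eps$-neighborhood of $A$: it supplies a small region on which $H_1$ is already the identity, into which the nontrivial part of $H_1$ can be ``hidden'' by means of an auxiliary $C^k$-diffeomorphism $\Phi$ of $\bbbr^n$ equal to the identity both outside the $\eps$-neighborhood of $A$ and on a neighborhood of $\overbar{B}(0,\varrho)$.  I expect the construction to combine $H_1$ and such a $\Phi$ in a short, explicit, conjugation- or composition-style formula, and the subtle point will be to verify that this combination is simultaneously well defined (despite $H_1$ being defined only on $\overbar{B}(0,\varrho)$), $C^k$-smooth, and a global bijection.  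Once such $\widehat{H}_1$ is in hand, the preceding reduction completes the proof.
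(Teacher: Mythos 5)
Your reduction is sound and matches the paper's strategy: apply Lemma~\ref{T15} to replace $H$ by $H_1$ with $H_1=\id$ on $B(0,\delta)$ and $H_1=H$ near $\partial B(0,\varrho)$, note that $H$ and $H_1$ have the same image of $\overbar{B}(0,\varrho)$, and observe that it suffices to extend $H_1$ to a global diffeomorphism supported in the $\eps$-neighborhood of $A$. But the proof stops exactly where the lemma becomes nontrivial. You write that you ``expect the construction to combine $H_1$ and such a $\Phi$ in a short, explicit, conjugation- or composition-style formula'' and that ``the subtle point will be to verify'' that it works --- that verification \emph{is} the content of Palais' lemma, and it is absent. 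The actual construction is: extend $H$ (hence $H_1$) to $B(0,\varrho+3\tau)$, let $\Phi(x)=\phi(|x|)x$ radially compress $B(0,\varrho+\tau)$ onto $B(0,\delta)$ while being the identity outside $B(0,\varrho+2\tau)$, check that $H_1\circ\Phi^{-1}\circ H_1^{-1}$ is the identity near $\partial H_1(B(0,\varrho+3\tau))$ and so extends by the identity to all of $\bbbr^n$, and set $H_2=\bigl(\reallywidetilde{H_1\circ\Phi^{-1}\circ H_1^{-1}}\bigr)\circ\Phi$; one then computes that $H_2=H_1$ on $B(0,\varrho+\tau)$ because $\Phi$ maps that ball into $B(0,\delta)$ where $H_1^{-1}=\id$. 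None of these steps, nor any substitute for them, appears in your write-up.

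Moreover, the one concrete property you do ascribe to the auxiliary map $\Phi$ is wrong: you ask for $\Phi$ to be ``equal to the identity \ldots on a neighborhood of $\overbar{B}(0,\varrho)$.'' The whole point of Palais' trick is the opposite: $\Phi$ must move $\overbar{B}(0,\varrho)$ drastically, compressing it (together with a slightly larger ball) into the tiny ball $B(0,\delta)$ on which $H_1$ is already the identity; only outside $B(0,\varrho+2\tau)$ is $\Phi$ the identity. A $\Phi$ that fixes a neighborhood of $\overbar{B}(0,\varrho)$ cannot ``hide'' the nontrivial part of $H_1$ anywhere, so no composition or conjugation built from such a $\Phi$ can produce the extension. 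In short: the framing and the reduction are correct and coincide with the paper's, but the core construction is missing and the guessed shape of the auxiliary diffeomorphism would not work.
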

(Note that the formula \eqref{eq1} describes a set of conditions satisfied by $\widetilde{H}$, not a definition for $\widetilde{H}$; we shall use this notation repeatedly in the paper.)
\begin{remark}
If $\widetilde{H}:\bbbr^n\twoheadrightarrow\bbbr^n$ is a diffeomorphism such that $\widetilde{H}(0)=0$, but $\widetilde{H}(x)\neq x$ for $0\neq x\in\overbar{B}(0,\varrho)$, then clearly $\widetilde{H}(x)\neq x$ for $0\neq x\in A$ and hence $\widetilde{H}(x)\neq x$ in a neighborhood of $\partial A$. Therefore, the condition $\widetilde{H}(x)=x$ if $\dist(x,A)\geq\eps$ in \eqref{eq1} is sharp.
\end{remark}
\begin{proof}
Let $\eps>0$ be given. By assumption, $H$ extends to a $C^k$-diffeomorphism on $B(0, \varrho + 3 \tau)$ for some $\tau > 0$; we denote this extension by $H$ as well. By decreasing $\tau>0$ if necessary, we may assume that
$$
B(0,\varrho+3\tau)\cup H(B(0,\varrho+3\tau))\subset B(A, \eps).
$$
By Lemma~\ref{T15}, there is a~constant $\delta\in (0,\varrho/2)$ and a $C^k$-diffeomorphism $H_1:B(0,\varrho+3\tau)\to\bbbr^n$ such that
$$
H_1(x)=
\begin{cases}
H(x) & \text{ if } x\in B(0,\varrho+3\tau)\setminus B(0,\varrho/2),\\
x    & \text{ if } x\in B(0,\delta).
\end{cases}
$$

Let $\phi:\bbbr\to\bbbr$ be a non-decreasing, smooth function satisfying 
$$
\phi(t)=
\begin{cases} 
1 &\text{ if } t>\varrho+2\tau,\\
\delta(\varrho+\tau)^{-1} & \text{ if } t<\varrho+\tau,
\end{cases}
$$
and define $\Phi:\bbbr^n \twoheadrightarrow \bbbr^n$ by $\Phi(x):=\phi(|x|)x$. The mapping $\Phi$ is obviously a smooth diffeomorphism of $\bbbr^n$. Note also that $\Phi$ acts as scaling by a factor of $\delta(\varrho+\tau)^{-1}$ on $B(0,\varrho+\tau)$ and that $\Phi(B(0,\varrho+\tau))=B(0,\delta)$. Moreover, $\Phi(x)=x$ when $|x|>\varrho+2\tau$. 
Since $B(0, \varrho + 2\tau) \subset B(A, \eps)$, $\Phi = \id$ in $\bbbr^n \setminus B(A, \eps)$.

Consider the $C^k$-diffeomorphism
\begin{equation}
\label{eq2}
H_1\circ\Phi^{-1}\circ H_1^{-1}: H_1(B(0,\varrho+3\tau))\to\bbbr^n.
\end{equation}
It is a well defined diffeomorphism, because $\Phi$ maps the ball $B(0,\varrho+3\tau)$ onto itself. The diffeomorphism defined in~\eqref{eq2} is identity near the boundary of $H_1(B(0,\varrho+3\tau))$, because
\begin{equation}
\label{eq3}
H_1\circ \Phi^{-1}\circ H_1^{-1}(x)=x
\quad
\text{for}
\quad
x\in H_1\big(B(0,\varrho+3\tau)\setminus B(0,\varrho+2\tau)\big).
\end{equation}
Indeed, $\Phi^{-1}$ is the identity on $B(0,\varrho+3\tau)\setminus B(0,\varrho+2\tau)$, so for $x$ as in \eqref{eq3}, we have $\Phi^{-1}(H_1^{-1}(x))=H_1^{-1}(x)$.
Therefore, the $C^k$-diffeomorphism \eqref{eq2} has the extension to a $C^k$-diffeomorphism of $\bbbr^n$ by identity:
\begin{equation}
\label{eq4}
\reallywidetilde{H_1\circ \Phi^{-1}\circ H_1^{-1}}=
\begin{cases}
H_1\circ \Phi^{-1}\circ H_1^{-1} & \text{ in } H_1(B(0,\varrho+3\tau)),\\
\id                              & \text{ in } \bbbr^n\setminus H_1(B(0,\varrho+3\tau)).
\end{cases}
\end{equation}
Moreover, the mapping defined in~\eqref{eq4} is identity in $\bbbr^n \setminus B(A, \eps)$, because $H_1(B(0,\varrho+3\tau))=H(B(0,\varrho+3\tau))\subset B(A, \eps)$.

Now, we define the $C^k$-diffeomorphism
\begin{equation}
\label{eq6}
H_2=\big(\reallywidetilde{H_1\circ \Phi^{-1}\circ H_1^{-1}}\big)\circ\Phi:\bbbr^n\twoheadrightarrow\bbbr^n.
\end{equation}
Note that since $\Phi$ and the diffeomorphism defined in~\eqref{eq4} are both equal to identity in the set $\bbbr^n \setminus B(A, \eps)$, so is $H_2$.

If $x\in B(0,\varrho+\tau)$, then
$$
\Phi(x)=\delta(\varrho+\tau)^{-1}x\in B(0,\delta)\subset H_1(B(0,\varrho+3\tau)).
$$
Thus by \eqref{eq4}, and taking into account that  $H_1^{-1}=\id$ on $B(0,\delta)$, for $x\in B(0,\varrho+\tau)$ we have
$$
H_2(x)=H_1\circ\Phi^{-1}\circ H_1^{-1}(\Phi(x))=
H_1\circ \Phi^{-1}\circ \Phi(x)=H_1(x).
$$

Finally, since $H_2=H_1=H$ near $\partial B(0,\varrho)$, we can set
$$
\widetilde{H}:=
\begin{cases} 
H_2&\text{ on }\bbbr^n\setminus \overbar{B}(0,\varrho),\\
H&\text{ on }\overbar{B}(0,\varrho),
\end{cases}
$$
and, clearly, $\widetilde{H}$ is a diffeomorphism that satisfies \eqref{eq1}.
\end{proof}

\begin{corollary}
\label{T7}
Let $G: \overbar{B}(a, r) \twoheadrightarrow \overbar{B}(a, r)$ be an orientation preserving $C^k$-diffeomorphism, $k\in\bbbn\cup\{\infty\}$. Then for any $\eps > 0$, there is a $C^k$-diffeomorphism $\widetilde{G}: \bbbr^n \twoheadrightarrow \bbbr^n$ such that $\widetilde{G} = G$ on $\overbar{B}(a, r)$ and $\widetilde{G} = \id$ outside $B(a, r + \eps)$.
\end{corollary}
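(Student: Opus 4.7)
The plan is to reduce \Cref{T7} to \Cref{T3} by first composing $G$ with a compactly supported diffeomorphism that sends $G(a)$ back to $a$; this allows us to apply Palais' lemma at the fixed point. After translating we may assume $a = 0$. Let $q := G(0)$; since $G$ is a diffeomorphism of $\overbar{B}(0,r)$ onto itself, invariance of domain forces $q \in B(0,r)$.

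Next, I would construct a $C^k$-diffeomorphism $T : \bbbr^n \twoheadrightarrow \bbbr^n$ satisfying $T(q) = 0$ and $T = \id$ outside $B(0,r)$. Such a $T$ is obtained as the time-$1$ flow of the smooth, compactly supported vector field $X(x) = -\phi(x)\, q$, where $\phi : \bbbr^n \to [0,1]$ is a smooth bump equal to $1$ on an open neighborhood of the segment $\{sq : s \in [0,1]\}$ (which lies in $B(0,r)$ because $|q| < r$) and supported in $B(0,r)$. Since $T^{-1}$ is also the identity outside $B(0,r)$, a short argument shows that $T$ maps $\overbar{B}(0,r)$ onto itself. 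Consequently $H := T \circ G$ is an orientation preserving $C^k$-diffeomorphism of $\overbar{B}(0,r)$ onto itself with $H(0) = 0$.

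Now apply \Cref{T3} to $H$ with the given $\eps$ to produce a $C^k$-diffeomorphism $\widetilde H : \bbbr^n \twoheadrightarrow \bbbr^n$ with $\widetilde H = H$ on $\overbar{B}(0,r)$ and $\widetilde H = \id$ outside $B(A,\eps)$, where $A = \overbar{B}(0,r) \cup H(\overbar{B}(0,r)) = \overbar{B}(0,r)$; hence $\widetilde H = \id$ outside $B(0, r+\eps)$. Define $\widetilde G := T^{-1} \circ \widetilde H$. On $\overbar{B}(0,r)$ this gives $\widetilde G = T^{-1} \circ T \circ G = G$, while outside $B(0, r+\eps) \supset B(0,r)$ both $\widetilde H$ and $T^{-1}$ are the identity, so $\widetilde G = \id$ there. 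Translating back to center $a$ yields the desired extension. Orientation preservation is automatic since $T$ is isotopic to $\id$ through its flow, $G$ is orientation preserving by hypothesis, and \Cref{T3} preserves orientation. The main (mild) obstacle is the construction of $T$, but this is a routine flow-of-a-cutoff-vector-field argument.
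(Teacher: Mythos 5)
Your proposal is correct and follows essentially the same route as the paper: translate so the center is the origin, post-compose $G$ with a compactly supported diffeomorphism (built from the flow of a cut-off vector field) that returns $G(a)$ to $a$, apply Lemma~\ref{T3} to the resulting origin-fixing diffeomorphism, and then undo the auxiliary composition. The details (including the observation that $A=\overbar{B}(0,r)$ so the $\eps$-neighborhood condition becomes $B(0,r+\eps)$) all check out.
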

\begin{proof}
If $G(a) = a$, it is a straightforward consequence of Lemma~\ref{T3}. If not, we construct a $C^\infty$-diffeomorphism $F: \bbbr^n \twoheadrightarrow \bbbr^n$ such that $F(G(a)) = a$ and $F = \id$ in $\bbbr^n\setminus B(a, r)$. 
One can construct $F$ through a $1$-parameter group of diffeomorphisms generated by a compactly supported vector field, see e.g. \cite[Lemma 3.13]{GGH}.

Let $T(x) = x-a$ and $G_1 = T \circ F \circ G \circ T^{-1}$. Then $G_1$ is an orientation preserving  $C^k$-diffeomorphism of $\overbar{B}(0,r)$ onto itself, $G_1(0)=0$. For any $\eps > 0$, we may thus apply Lemma~\ref{T3} to $G_1$, obtaining a diffeomorphism $\widetilde{G}_1:\bbbr^n\twoheadrightarrow\bbbr^n$ such that 
$$
\widetilde{G}_1(x)=
\begin{cases}
    G_1(x) &\text{ for }x\in \overbar{B}(0,r),\\
    x & \text{ for }x \not \in B(0,r+\eps).
\end{cases}
$$
Now, $\widetilde{G}=F^{-1}\circ T^{-1}\circ \widetilde{G}_1\circ T$ satisfies the claim of the lemma.
\end{proof}

\section{Proof of Theorem \ref{T1}}
\label{Proof1}

\begin{lemma}
\label{T4}
Let $\{ D_i\}_{i=1}^\ell$, $D_i\subset\bbbr^n$, be a family of pairwise disjoint $C^k$-closed balls and let $p_i\in\mathring{D}_i$, $i=1,\ldots,\ell$, be given. Then there is $\eps_o >0$ such that for any $\eps\in (0,\eps_o)$ there is a $C^k$-diffeomorphism $F_\eps:\bbbr^n\twoheadrightarrow\bbbr^n$ satisfying $F_\eps(\overbar{B}(p_i,\eps))=D_i$ for $i=1,\ldots,\ell$ and 
$$
F_\eps(x)=x
\text{ if } \dist\Big(x,\bigcup \nolimits_{i=1}^\ell D_i\Big)\geq\eps.
$$ 
\end{lemma}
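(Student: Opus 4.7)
The plan is to reduce to the case $\ell=1$ and then invoke Palais' Lemma~\ref{T3}. Since the compact sets $D_1,\ldots,D_\ell$ are pairwise disjoint and each $p_i$ lies in the interior of its ball, I would first fix $\eps_o>0$ small enough that for every $\eps\in(0,\eps_o)$ both (i) the open neighborhoods $B(D_i,\eps)$ are pairwise disjoint, and (ii) $\overbar{B}(p_i,\eps)\subset\mathring{D}_i$ for each $i$. It then suffices to construct, for each index $i$ independently, a $C^k$-diffeomorphism $F_\eps^{(i)}:\bbbr^n\twoheadrightarrow\bbbr^n$ that sends $\overbar{B}(p_i,\eps)$ onto $D_i$ and equals the identity outside $B(D_i,\eps)$; then defining $F_\eps$ to coincide with $F_\eps^{(i)}$ on $B(D_i,\eps)$ and with $\id$ on the complement of $\bigcup_i B(D_i,\eps)$ yields the required global diffeomorphism, thanks to (i).

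Now fix $i$ and let $\phi:B(0,1+\eta)\to\bbbr^n$ be the $C^k$-diffeomorphism with $\phi(\overbar{\bbbb}^n)=D_i$ supplied by the definition of a $C^k$-closed ball. After composing with a reflection of the source (which maps $\overbar{\bbbb}^n$ onto itself) we may assume $\phi$ is orientation preserving. To arrange also $\phi(0)=p_i$, I would pre-compose $\phi$ with a $C^\infty$-diffeomorphism $\alpha$ of $\bbbr^n$ that sends $0$ to $q:=\phi^{-1}(p_i)\in\mathring{\bbbb}^n$ and is the identity outside $\bbbb^n$; such an $\alpha$ is the time-one map of the flow of a smooth, compactly supported vector field inside $\bbbb^n$, exactly as in the proof of Corollary~\ref{T7}. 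From now on $\phi(0)=p_i$.

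For $\eps\in(0,\eps_o)$ define the ``zoom-in'' diffeomorphism
\[
\Phi_\eps(x):=\phi\bigl((x-p_i)/\eps\bigr)\qquad\text{on }B\bigl(p_i,\eps(1+\eta)\bigr).
\]
This is an orientation-preserving $C^k$-diffeomorphism with $\Phi_\eps(p_i)=p_i$ and $\Phi_\eps(\overbar{B}(p_i,\eps))=\phi(\overbar{\bbbb}^n)=D_i$. Setting $H(y):=\Phi_\eps(y+p_i)-p_i$ on $\overbar{B}(0,\eps)$, we get an orientation-preserving $C^k$-diffeomorphism with $H(0)=0$ that extends to a neighborhood of $\overbar{B}(0,\eps)$. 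Palais' Lemma~\ref{T3} applied with $\varrho=\eps$ and tolerance parameter equal to $\eps$ then produces a $C^k$-diffeomorphism $\widetilde{H}:\bbbr^n\twoheadrightarrow\bbbr^n$ equal to $H$ on $\overbar{B}(0,\eps)$ and equal to the identity wherever $\dist(y,A)\geq\eps$, where
\[
A=\overbar{B}(0,\eps)\cup H(\overbar{B}(0,\eps))=\overbar{B}(0,\eps)\cup(D_i-p_i).
\]
By (ii), $\overbar{B}(0,\eps)\subset D_i-p_i$, hence $A=D_i-p_i$. Translating back, $F_\eps^{(i)}(x):=\widetilde{H}(x-p_i)+p_i$ satisfies both $F_\eps^{(i)}(\overbar{B}(p_i,\eps))=D_i$ and $F_\eps^{(i)}(x)=x$ whenever $\dist(x,D_i)\geq\eps$, as required.

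The argument is essentially bookkeeping around Lemma~\ref{T3}; there is no deep obstacle. The only step that is not completely formal is the construction of the auxiliary diffeomorphism $\alpha$ moving $0$ to $q$ through a compactly supported flow, but this is standard and mirrors an argument already used in Corollary~\ref{T7}. Some mild care is needed in choosing $\eps_o$ uniformly for all $i$, but both requirements (i) and (ii) hold for all sufficiently small $\eps$.
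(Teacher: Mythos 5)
Your proposal is correct and follows essentially the same route as the paper: normalize the parametrizing diffeomorphism of each $D_i$ so that it fixes $p_i$ (via a compactly supported flow), apply Palais' Lemma~\ref{T3} to get a global extension supported in the $\eps$-neighborhood of $D_i$, and glue using the disjointness of these neighborhoods. The only differences are cosmetic (you pre-compose inside $\bbbb^n$ rather than post-compose inside $D_i$, and you translate explicitly to the origin), and you are slightly more careful than the paper in handling orientation.
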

\begin{proof}
Let $0<\eps_o\leq \frac{1}{4}\min_{i\neq j}\dist(D_i,D_j)$ be such that $\overbar{B}(p_i,\eps_o)\subset \mathring{D}_i$ for all $i=1,2,\ldots,\ell$ and let $\eps\in (0,\eps_o)$. By assumption, there are $C^k$-diffeomorphisms $H_i:\overbar{B}(p_i,\eps)\twoheadrightarrow D_i$. Additionally, we can assume that $H_i(p_i)=p_i$. Indeed, 
if $H_i(p_i)\neq p_i$, we compose $H_i$ with a diffeomorphism of $D_i$ which is identity near $\partial D_i$ and maps $H_i(p_i)$ to $p_i$. Such a diffeomorphism can be constructed through a $1$-parameter group of diffeomorphisms generated by a compactly supported vector field. Note that
$$
\overbar{B}(p_i,\eps)\cup H_i(\overbar{B}(p_i,\eps))=D_i
\quad
\text{for } i=1,2,\ldots,\ell.
$$
According to Lemma~\ref{T3}, each $H_i$ can be extended to a diffeomorphism $\widetilde{H}_i:\bbbr^n\twoheadrightarrow\bbbr^n$ that is identity outside the $\eps$-neighborhood of $D_i$. Since the closures of the sets $B(D_i, \eps)$ are pairwise disjoint, we can glue the diffeomorphisms $\widetilde{H}_i$ to a diffeomorphism $F_\eps$ satisfying the claim of the lemma.
\end{proof}

The next result is well known.
The diffeomorphism $H$ can be constructed using $1$\nobreakdash-para\-meter groups of diffeomorphisms with compact support in $U$, see e.g. \cite[Lemma 3.13]{GGH}.
\begin{lemma}
\label{T5}
Let $U\subset\bbbr^n$ be a domain and let
$\{p_i\}_{i=1}^\ell$ and $\{q_i\}_{i=1}^\ell$ be given points in $U$such that $p_i \neq p_j$ and $q_i \neq q_j$ for $i \neq j$. Then, there exists an~$\eps_o > 0$ such that for any $\eps\in (0,\eps_o)$ there is a~$C^\infty$-diffeomorphism $H: \bbbr^n\twoheadrightarrow\bbbr^n$, identity on $\bbbr^n\setminus U$, such that
\begin{equation}
\label{eq T5}
H(\overbar{B}(q_i,\eps))= \overbar{B}(p_i,\eps).
\end{equation}
\end{lemma}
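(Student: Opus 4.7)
The plan is to realize $H$ as the time-$1$ map of the flow of a smooth, compactly-supported time-dependent vector field on $U$, chosen so that the flow acts as a rigid translation on a small ball around each $q_i$, carrying $\overbar{B}(q_i,\eps)$ rigidly along a prescribed curve onto $\overbar{B}(p_i,\eps)$. I expect the main obstacle to be a combinatorial one: the construction below requires pairwise disjoint arcs joining $q_i$ to $p_i$, which cannot exist if some $p_i$ coincides with some $q_j$ for $i\neq j$ (a case that the hypotheses allow). I would dispose of this by a two-stage reduction: pick auxiliary points $r_1,\ldots,r_\ell\in U$ that are pairwise distinct and disjoint from $\{q_j\}\cup\{p_j\}$, run the main construction once for the data $(q_i,r_i)$ and once for $(r_i,p_i)$, and compose the two resulting diffeomorphisms. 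After this reduction we may assume the $2\ell$ points $q_1,\ldots,q_\ell,p_1,\ldots,p_\ell$ are all distinct.

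Next I would pick pairwise disjoint, smoothly embedded arcs $\gamma_i\colon[0,1]\to U$ with $\gamma_i(0)=q_i$ and $\gamma_i(1)=p_i$. Such a family exists by an inductive general-position argument: since $n\geq 2$, a smoothly embedded simple arc does not disconnect a domain in $\bbbr^n$, so after $\gamma_1,\ldots,\gamma_{k-1}$ have been fixed the remaining complement in $U$ is still path-connected and contains $q_k$ and $p_k$, allowing a choice of $\gamma_k$ avoiding the earlier arcs. Take $\eps_o>0$ so small that the closed $2\eps_o$-tubes $T_i:=\{x\in\bbbr^n:\dist(x,\gamma_i([0,1]))\leq 2\eps_o\}$ are pairwise disjoint and contained in $U$. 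For $\eps\in(0,\eps_o)$ pick cutoffs $\psi_i\in C_c^\infty(U)$ with $\psi_i\equiv 1$ on the tube $\{\dist(\cdot,\gamma_i([0,1]))\leq\eps\}$ and $\supp\psi_i\subset T_i$, and define
\[
X(t,x):=\sum_{i=1}^\ell \psi_i(x)\,\gamma_i'(t),\qquad (t,x)\in[0,1]\times\bbbr^n.
\]

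Then $X$ is a smooth time-dependent vector field supported in $\bigcup_i T_i\subset U$, so its flow $\Phi_t$ is a $C^\infty$-diffeomorphism of $\bbbr^n$ onto itself that equals the identity off $\bigcup_i T_i$, and in particular off $U$. For $w\in\bbbr^n$ with $|w|\leq\eps$, the curve $y(t):=\gamma_i(t)+w$ remains in the region where $\psi_i=1$ and $\psi_j=0$ for $j\neq i$, hence satisfies $\dot y(t)=\gamma_i'(t)=X(t,y(t))$, so it is the integral curve of $X$ through $q_i+w$. Consequently $\Phi_1(q_i+w)=p_i+w$, which gives $\Phi_1(\overbar{B}(q_i,\eps))=\overbar{B}(p_i,\eps)$. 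Setting $H:=\Phi_1$ (composed with its twin from the two-stage reduction, if that reduction was needed) furnishes the required diffeomorphism.
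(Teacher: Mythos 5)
Your construction is correct and is precisely the standard argument the paper invokes without writing out (it refers to \cite[Lemma 3.13]{GGH}): the time-one map of a compactly supported vector field that rigidly translates each small ball along an arc from $q_i$ to $p_i$, and your two-stage reduction through auxiliary points is a worthwhile extra detail, since the hypotheses really do allow $p_i=q_j$ for $i\neq j$. The only step you state a bit too casually is the claim that a smoothly embedded compact arc never disconnects a domain: for $n\geq 3$ this is general position, but for $n=2$ it rests on the Phragm\'en--Brouwer property (unicoherence) of $\mathbb{S}^2$ rather than on perturbation, so a word of justification or a reference would be appropriate there.
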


Combining Lemmata~\ref{T4} and~\ref{T5} we obtain the following result, which is a Euclidean variant of Theorem~\ref{T1}.
\begin{corollary}
\label{T6}
Let $U\subset\bbbr^n$ be a domain.
Suppose that $\{D_i\}_{i=1}^\ell$ and $\{D'_i\}_{i=1}^\ell$, $D_i, D'_i\subset U$, are two families of pairwise disjoint $C^k$-closed balls. If $F_i:D_i\twoheadrightarrow D'_i$, $i=1,2,\ldots,\ell$, are orientation preserving $C^k$-diffeomorphisms, then there is a $C^k$-diffeomorphism ${F:\bbbr^n\twoheadrightarrow\bbbr^n}$ such that $F|_{D_i}=F_i$ and $F=\id$ in $\bbbr^n\setminus U$.
\end{corollary}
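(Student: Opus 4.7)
The plan is to conjugate the problem down to the case where each $F_i$ becomes an orientation preserving self-diffeomorphism of a small ball, apply Corollary~\ref{T7} to extend those by the identity, glue the disjoint extensions, and then undo the conjugation.

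First I will pick basepoints $p_i\in\mathring{D}_i$ and set $q_i:=F_i(p_i)\in\mathring{D}'_i$. Applying Lemma~\ref{T4} twice---once to $\{D_i\}$ with the points $p_i$, and once to $\{D'_i\}$ with the points $q_i$---and choosing $\eps>0$ small enough that the $\eps$-neighborhoods of both $\bigcup_i D_i$ and $\bigcup_i D'_i$ lie inside $U$ (possible because these unions are compact subsets of the open set $U$), I obtain $C^k$-diffeomorphisms $G,G':\bbbr^n\twoheadrightarrow\bbbr^n$, each equal to the identity outside $U$, satisfying $G(\overbar{B}(p_i,\eps))=D_i$ and $G'(\overbar{B}(q_i,\eps))=D'_i$.

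Next I will invoke Lemma~\ref{T5} on the domain $U$ with the points $\{p_i\}$ and $\{q_i\}$ to obtain, after possibly shrinking $\eps$, a $C^\infty$-diffeomorphism $H:\bbbr^n\twoheadrightarrow\bbbr^n$, identity outside $U$, with $H(\overbar{B}(q_i,\eps))=\overbar{B}(p_i,\eps)$. For each $i$ the composition
\[
J_i := H\circ {G'}^{-1}\circ F_i\circ G
\]
is then an orientation preserving $C^k$-diffeomorphism of $\overbar{B}(p_i,\eps)$ onto itself (orientation preservation follows from the fact that $G$, $G'$ and $H$ each agree with the identity outside a compact set, hence are orientation preserving, while $F_i$ is orientation preserving by hypothesis). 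Applying Corollary~\ref{T7} to each $J_i$ with a common parameter $\eta>0$ small enough that the enlarged open balls $B(p_i,\eps+\eta)$ remain pairwise disjoint and contained in $U$, I obtain diffeomorphisms $\widetilde{J}_i:\bbbr^n\twoheadrightarrow\bbbr^n$ equal to $J_i$ on $\overbar{B}(p_i,\eps)$ and to the identity outside $B(p_i,\eps+\eta)$. Because they equal the identity outside pairwise disjoint sets, I can paste them into a single $C^k$-diffeomorphism $\widetilde{J}:\bbbr^n\twoheadrightarrow\bbbr^n$ that agrees with $J_i$ on $\overbar{B}(p_i,\eps)$ and with the identity outside $U$.

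The desired map is then
\[
F := G'\circ H^{-1}\circ \widetilde{J}\circ G^{-1}.
\]
For $x\in D_i$ one has $G^{-1}(x)\in\overbar{B}(p_i,\eps)$, so $\widetilde{J}(G^{-1}(x))=J_i(G^{-1}(x))$; unrolling the definition of $J_i$ and cancelling $H^{-1}\circ H$ and $G'\circ {G'}^{-1}$ gives $F(x)=F_i(x)$. Outside $U$ all four factors are the identity, so $F=\id$ there, and $F$ is a $C^k$-diffeomorphism as a composition of such. I do not expect a serious obstacle in this argument: once Lemmata~\ref{T4} and~\ref{T5} and Corollary~\ref{T7} are in hand, the proof is essentially bookkeeping with conjugations. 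The only mildly delicate point is the simultaneous choice of $\eps$ and $\eta$ small enough to satisfy the hypotheses of all three results and to keep every auxiliary neighborhood confined to $U$, which is a routine compactness argument.
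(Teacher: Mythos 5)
Your proof is correct and follows essentially the same route as the paper's: conjugating by the diffeomorphisms from Lemma~\ref{T4} and Lemma~\ref{T5} to reduce each $F_i$ to a self-diffeomorphism of a small ball $\overbar{B}(p_i,\eps)$, extending by Corollary~\ref{T7}, gluing, and conjugating back. Only the notation differs (your $G$, $G'$, $J_i$, $\widetilde J$ are the paper's $A_1$, $A_2$, $G_i$, $G$).
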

\begin{proof}
Let $p_i\in\mathring{D}_i$ and $q_i\in\mathring{D}_i'$. By Lemma~\ref{T4}, there are diffeomorphisms ${A_1,A_2:\bbbr^n\twoheadrightarrow\bbbr^n}$ such that
$$
A_1(\overbar{B}(p_i,\eps))=D_i, 
\quad
A_2(\overbar{B}(q_i,\eps))=D_i',
\quad
\text{and}
\quad
A_{i}=\id \text{ in } \bbbr^n\setminus U \quad \text{for } i = 1,2.
$$
This is true provided $\eps>0$ is sufficiently small. In what follows we will not clarify how small $\eps$ should be, but it will always be clear from the context.

Let $H:\bbbr^n\twoheadrightarrow\bbbr^n$ be a diffeomorphism as in Lemma~\ref{T5}. Then,
$$
G_i:=H\circ A_2^{-1}\circ F_i\circ A_1:\overbar{B}(p_i,\eps) \twoheadrightarrow \overbar{B}(p_i,\eps),
\quad
i=1,2,\ldots, \ell
$$
are orientation preserving diffeomorphisms. According to Corollary~\ref{T7} each of them can be extended to a diffeomorphism $\widetilde{G}_i:\bbbr^n\twoheadrightarrow\bbbr^n$ that is identity outside an arbitrarily small neighborhood of $\overbar{B}(p_i,\eps)$ so they can be glued to a diffeomorphism $G:\bbbr^n\twoheadrightarrow\bbbr^n$ satisfying $G|_{\overbar{B}(p_i,\eps)}=G_i$.
We can clearly guarantee that $G=\id$ in $\bbbr^n\setminus U$. Then, it easily follows that
$F:=A_2\circ H^{-1}\circ G\circ A_1^{-1}$ satisfies the claim of the corollary.
\end{proof}

\begin{proof}[Proof of Corollary~\ref{T13}]
Since $F^{-1}\circ G:D_2\to\mathring{D}_1$ is an orientation preserving diffeomorphism between $C^k$-closed balls
$D_2$ and $(F^{-1}\circ G)(D_2)\subset \mathring{D}_1$, it follows from Corollary~\ref{T6} that there is a diffeomorphism $\Phi:\bbbr^n\twoheadrightarrow\bbbr^n$ such that $\Phi=F^{-1}\circ G$ in $D_2$ and $\Phi=\id$ in $\bbbr^n\setminus\mathring{D}_1$. Then the diffeomorphism $H:=F\circ\Phi:\Omega\to\bbbr^n$ is well defined and satisfies $H|_{D_2}=G$, $H|_{\Omega\setminus\mathring{D}_1}=F$.
\end{proof}

\begin{proof}[Proof of Theorem~\ref{T1}.]
We prove the result by reducing it to the Euclidean setting, i.e., to the case treated in Corollary~\ref{T6}. 

Let $U$ be a domain containing $D_i, D'_i$ for $i = 1, \ldots, \ell$; note that we include the case $U=\mathcal{M}^n$. Let $K\subset U$ be a $C^k$-closed ball. Then, there are $C^k$-diffeomorphisms $H,H':\mathcal{M}^n\twoheadrightarrow\mathcal{M}^n$ such that $H(D_i)\subset\mathring{K}$, $H'(D_i')\subset\mathring{K}$ for all $i=1,2,\ldots,\ell$, and $H$ and $H'$ are identity outside $U$.

We will sketch the construction of $H$ leaving details to the reader. The construction of $H'$ is analogous. Since $D_i$ is diffeomorphic to $\overbar{\bbbb}^n$, we can find a diffeomorphism $\Psi_i:\mathcal{M}^n\twoheadrightarrow\mathcal{M}^n$, identity outside a small neighborhood of $D_i$, that maps $D_i$ into a small neighborhood of $p_i\in\mathring{D}_i$ (cf.\ the construction of $\Phi$ in the proof of Lemma~\ref{T3}). Since the sets $D_i$ are pairwise disjoint, we can glue the diffeomorphisms $\Psi_i$ to $\Psi:\mathcal{M}^n\twoheadrightarrow\mathcal{M}$ that is identity outside a small neighborhood of $\bigcup_{i=1}^\ell D_i$, and hence identity outside $U$.

Next, we construct a vector field $X$ with compact support in $U$, whose $1$-parameter group of diffeomorphisms $\Phi_t$ satisfies $\Phi_{t_o}(\Psi(D_i))\subset\mathring{K}$ for all $i$, provided $t_o>0$ is sufficiently large. To construct such a vector field, we take smooth arcs $\gamma_i$ in $U$, connecting $p_i$ to some $p_i'\in\mathring{K}$ and making sure that $\gamma_i\cap \gamma_j=\varnothing$ if $i\neq j$. Then, we take vector fields along $\gamma_i$ in the direction from $p_i$ to $p_i'$, and extend them to compactly supported vector fields $X_i$ on $\mathcal{M}^n$ with the supports near $\gamma_i$. Clearly, we can guarantee that $\supp X_i\cap\supp X_j=\varnothing$ if $i\neq j$, 
and $\supp X_i\subset U$ for all $i$,
so the vector fields glue to a compactly supported vector field $X$ on $\mathcal{M}^n$ with $\supp X\subset U$. Let $\Phi_t$ be the 1-parameter group of diffeomorphisms generated by $X$. If $\Psi(D_i)$ lies in a sufficiently small neighborhood of $p_i$ (which we can guarantee), then $\Phi_{t_o}(\Psi(D_i))\subset\mathring{K}$ for all $i$ and sufficiently large $t_o>0$. Thus, we can take $H:=\Phi_{t_o}\circ\Psi$. Clearly, $H=\id$ in $\mathcal{M}^n\setminus U$.

Let  
$\widehat{F}_i:H(D_i)\twoheadrightarrow H'(D_i')$,
$\widehat{F}_i=H'\circ F_i\circ H^{-1}$.
If we represent $K$ in a coordinate system, Corollary~\ref{T6} allows us to construct a diffeomorphism $\widehat{F}:K\twoheadrightarrow K$ such that
$$
\widehat{F}|_{H(D_i)}=\widehat{F}_i
\quad
\text{and}
\quad
\widehat{F}=\id \text{ near } \partial K,
$$
so $\widehat{F}$ extends to a diffeomorphism $\widehat{F}:\mathcal{M}^n\twoheadrightarrow\mathcal{M}^n$ by identity outside $K$. Now, it easily follows that
$$
F:=(H')^{-1}\circ\widehat{F}\circ H:\mathcal{M}^n\twoheadrightarrow\mathcal{M}^n
$$
satisfies $F|_{D_i}=F_i$
and $F=\id$ in 
$\mathcal{M}^n\setminus U$.
\end{proof}

\begin{example}
\label{T16}
In his celebrated article \cite{milnor} introducing exotic $7$-spheres, Milnor proved the existence of an orientation preserving $C^\infty$-diffeomorphism $f: \bbbs^6 \twoheadrightarrow \bbbs^6$ which is not isotopic through diffeomorphisms to identity (\cite[Theorem 5]{milnor}). This diffeomorphism can be extended radially to
a bi-Lipschitz homeomorphism of $\bbbr^7$, such that $\bbbr^7 \setminus \{0\}\to \bbbr^7$ is a $C^\infty$-diffeomorphism. 
However, it cannot be extended to a $C^\infty$-diffeomorphism onto $\bbbr^7 \twoheadrightarrow \bbbr^7$. If it were, it would be possible to find a $C^\infty$-diffeomorphism $G$ which coincides with $f$ on $\bbbs^6$ and which is equal to identity on a small ball $B(0,r)$, like in Lemma~\ref{T15}.
Since the annulus $\overbar{\bbbb}^7\setminus B(0,r)$ is diffeomorphic to $\bbbs^6\times [0,1]$, we would have
a diffeomorphism $F:\mathbb{S}^6\times[0,1]\twoheadrightarrow \mathbb{S}^6\times[0,1]$, $F(x,1)=f(x)$, $F(x,0)=x$. Such a diffeomorphism need not be an isotopy (the latter must preserve the sets $\mathbb{S}^6\times\{t\}$), but it is a pseudoisotopy between $f$ and the identity, and since we discuss diffeomorphisms of $\mathbb{S}^6$, by Cerf's pseudoisotopy-to-isotopy theorem (\cite{cerf}, see also  \cite[Theorem 2.71]{juhasz}) this implies that $f$ is isotopic to the identity, which is a contradiction.  

Also, if we consider $\bbbs^6$ to be embedded into $\bbbs^7$, we see that the same $f$ can be extended to a $C^\infty$-homeomorphism on an annulus $A \subset \bbbs^7$, but cannot be extended to a $C^\infty$-diffeomorphism of $\bbbs^7$.
\end{example}

\section{Topological local linearization}
\label{top}

\begin{definition}
Let $\varrho > 0$. We say that a homeomorphism $F: \bbbr^n \twoheadrightarrow \bbbr^n$ (or $F: B(0, \varrho) \twoheadrightarrow B(0, \varrho)$) is {\em stable} if
\begin{equation} \label{eq8}
F = f_1 \circ \ldots \circ f_k
\end{equation}
for some $k \in \mathbb{N}$ and homeomorphisms $f_i: \bbbr^n \twoheadrightarrow \bbbr^n$ (resp., $f_i: B(0,\varrho) \twoheadrightarrow B(0, \varrho)$) such that $f_i|_{U_i} = \id$ for some nonempty open set $U_i \subset \bbbr^n$ (resp., $U_i \subset B(0, \varrho)$).

We say that a homeomorphism $F: \bbbr^n \twoheadrightarrow \bbbr^n$ (or $F: B(0, \varrho) \twoheadrightarrow B(0, \varrho)$) is {\em locally bi-Lipschitz stable} if $F$ can be written as in \eqref{eq8} for some $k \in \mathbb{N}$ and for locally bi-Lipschitz homeomorphisms $f_i: \bbbr^n \twoheadrightarrow \bbbr^n$ (resp., $f_i: B(0,\varrho) \twoheadrightarrow B(0, \varrho)$) such that $f_i|_{U_i} = \id$ for some nonempty open set $U_i \subset \bbbr^n$ (resp., $U_i \subset B(0, \varrho)$).
\end{definition}
\begin{remark}
Without loss of generality, we may assume that the $U_i$ in the definition above are pairwise disjoint balls with the same radius (compactly contained in $B(0, \varrho)$).
\end{remark}
\begin{lemma}[Stable homeomorphism theorem] 
\label{T26}
Any orientation preserving homeomorphism $F: \bbbr^n \twoheadrightarrow \bbbr^n$ is stable. If $F$ is additionally locally bi-Lipschitz, then $F$ is locally bi-Lipschitz stable.
\end{lemma}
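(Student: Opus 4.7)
The statement of Lemma~\ref{T26} is the classical \emph{Stable Homeomorphism Theorem}, and its proof is genuinely deep---there is no elementary route. My plan is to reduce it to Kirby's torus trick (in the topological case) and to Sullivan's Lipschitz structure theorem (in the bi-Lipschitz case), treating those inputs as black boxes, since this is the only honest thing to do given the scope of the paper.

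First I would observe that the collection $\mathcal{S}$ of stable self-homeomorphisms of $\bbbr^n$ is a subgroup of the full homeomorphism group: it contains the identity, is obviously closed under composition by concatenating decompositions, and is closed under inversion because $(f_1\circ\cdots\circ f_k)^{-1} = f_k^{-1}\circ\cdots\circ f_1^{-1}$ with each $f_i^{-1}$ equal to the identity on the open set $f_i(U_i)$. Moreover, any $F$ that equals the identity on some nonempty open set is automatically stable (with $k=1$). The theorem thus reduces to showing that every orientation preserving self-homeomorphism of $\bbbr^n$ is a composition of finitely many self-homeomorphisms, each equal to the identity on some nonempty open set---and similarly with \emph{locally bi-Lipschitz} replacing \emph{homeomorphism} throughout.

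The standard route to this is a \emph{local stabilization} argument: given $F$, one exhibits a stable $G$ such that $G = F$ on some nonempty open set $U\subset \bbbr^n$. Then $F\circ G^{-1}$ equals the identity on the open set $G(U)$, hence $F\circ G^{-1}\in\mathcal{S}$ trivially, and so $F=(F\circ G^{-1})\circ G\in\mathcal{S}$. Constructing such a $G$ is exactly where Kirby's torus trick enters \cite{kirby}, and where, in dimension four, one must invoke Quinn's theorem \cite{Quinn}: one immerses a punctured torus into $\bbbr^n$, lifts $F$ to a self-homeomorphism of a neighborhood of $T^n\setminus\{*\}$, uses the Kirby--Siebenmann smoothing/PL theory to put the lift in a sufficiently tame form, and pushes the resulting map back to $\bbbr^n$. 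In the bi-Lipschitz case, the analogous role is played by Sullivan's theorem on the existence of Lipschitz structures on topological manifolds \cite{Sullivan}, as fleshed out by Tukia-V\"ais\"al\"a \cite{TukiaV}, with the existence of Sullivan manifolds verified via Okun's work \cite{okun} (cf.\ the discussion in \cite{Kapovich}). In low dimensions $n\leq 3$ these deep inputs can be bypassed by the classical work of Rad\'o \cite{Rado} and Moise \cite{Moise} in the topological case, and by more elementary bi-Lipschitz arguments of Tukia \cite{Tukia} and V\"ais\"al\"a \cite{Vaisala77}.

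The main obstacle is precisely the construction of the local stabilization $G$: the torus trick and Sullivan's structure theorem are each lengthy arguments requiring extensive technical machinery, and in the bi-Lipschitz case one must verify that every step can be carried out with uniform Lipschitz control, which is the essential content of \cite{Sullivan, TukiaV}. For this reason I would quote the cited references as black boxes rather than attempt to reproduce the underlying arguments. Granted those inputs, the subgroup reasoning above immediately yields both assertions of the lemma.
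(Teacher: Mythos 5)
Your proposal matches the paper's treatment: the paper gives no proof of Lemma~\ref{T26} either, citing exactly the same sources (Rad\'o and Moise for $n\leq 3$, Kirby for $n>4$, Quinn for $n=4$, and Sullivan together with Tukia--V\"ais\"al\"a in the bi-Lipschitz case) as black boxes. Your added remarks---that the stable homeomorphisms form a group and that it suffices to produce a stable $G$ agreeing with $F$ on some nonempty open set---are correct and consistent with the standard reduction, so there is nothing to object to.
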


The stable homeomorphism theorem in the homeomorphic case in dimensions $n = 2, 3$ is a classical result due to Rad\'{o} \cite{Rado} for $n = 2$ and Moise \cite{Moise} for $n=3$. For $n > 4$, it was proved by Kirby in \cite{kirby} and for $n = 4$ by Quinn in \cite{Quinn} (see page 1 and Theorem 2.2.2). For a short explanation of the intricacies of the proof in the topological case, we refer to \cite{hatcherMO}. In the bi-Lipschitz case, the stable homeomorphism theorem was derived in \cite[Theorem 3.12]{TukiaV} from the existence of groups known as Sullivan groups, whose existence was claimed by Sullivan in \cite{Sullivan}.

Since the open unit ball is homeomorphic to $\bbbr^n$ and this homeomorphism can be chosen to be locally bi-Lipschitz, the following is easy to prove.
\begin{corollary} 
\label{T17}
An orientation preserving homeomorphism $F: B(0, \varrho) \twoheadrightarrow B(0, \varrho)$ is stable. If, additionally, $F$ is locally bi-Lipschitz, then $F$ is also locally bi-Lipschitz stable.
\end{corollary}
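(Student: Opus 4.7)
The plan is to transport the problem to $\bbbr^n$ via a fixed locally bi-Lipschitz, orientation preserving homeomorphism $\Psi : B(0,\varrho) \twoheadrightarrow \bbbr^n$ (so that $\Psi^{-1}$ is also locally bi-Lipschitz), then apply Lemma~\ref{T26}, and finally conjugate back. A concrete choice of $\Psi$ is the radial map
\[
\Psi(x) = \tan\!\bigl(\tfrac{\pi|x|}{2\varrho}\bigr)\,\frac{x}{|x|} \quad \text{for } x\neq 0, \qquad \Psi(0)=0,
\]
which is a $C^\infty$-diffeomorphism of $B(0,\varrho)$ onto $\bbbr^n$, hence locally bi-Lipschitz in both directions, and is clearly orientation preserving.

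First, given an orientation preserving homeomorphism $F : B(0,\varrho) \twoheadrightarrow B(0,\varrho)$, the conjugate
\[
\widehat{F} := \Psi \circ F \circ \Psi^{-1} : \bbbr^n \twoheadrightarrow \bbbr^n
\]
is an orientation preserving homeomorphism, so by Lemma~\ref{T26} we may write $\widehat{F} = g_1 \circ \cdots \circ g_k$, where each $g_i : \bbbr^n \twoheadrightarrow \bbbr^n$ is a homeomorphism that is the identity on some nonempty open set $V_i \subset \bbbr^n$. Setting
\[
f_i := \Psi^{-1} \circ g_i \circ \Psi : B(0,\varrho) \twoheadrightarrow B(0,\varrho),
\]
we obtain $F = f_1 \circ \cdots \circ f_k$, and each $f_i$ is the identity on the nonempty open set $U_i := \Psi^{-1}(V_i) \subset B(0,\varrho)$. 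This shows $F$ is stable.

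For the locally bi-Lipschitz case, assume in addition that $F$ is locally bi-Lipschitz. Since $\Psi$ and $\Psi^{-1}$ are locally bi-Lipschitz on their respective domains, the composition $\widehat{F} = \Psi \circ F \circ \Psi^{-1}$ is a locally bi-Lipschitz, orientation preserving self-homeomorphism of $\bbbr^n$. Lemma~\ref{T26} then provides a factorization $\widehat{F} = g_1 \circ \cdots \circ g_k$ where each $g_i$ is locally bi-Lipschitz on $\bbbr^n$ and equal to the identity on a nonempty open $V_i$. The conjugated factors $f_i = \Psi^{-1}\circ g_i \circ \Psi$ are again locally bi-Lipschitz (as compositions of locally bi-Lipschitz maps), each equal to the identity on the nonempty open set $U_i = \Psi^{-1}(V_i)$, and their composition is $F$.

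There is no real obstacle here: the only thing one must check is that conjugation by $\Psi$ preserves both the ``identity on an open set'' property (immediate, since $\Psi$ is a bijection sending open sets to open sets) and the regularity class in question. For the topological case this is tautological, and for the bi-Lipschitz case it follows from the fact that the chosen radial $\Psi$ is a $C^\infty$-diffeomorphism, so its restriction to any compact subset of $B(0,\varrho)$ is bi-Lipschitz, and similarly for $\Psi^{-1}$ on compact subsets of $\bbbr^n$; hence compositions with $g_i$ remain locally bi-Lipschitz on $B(0,\varrho)$.
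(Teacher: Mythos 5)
Your proposal is correct and is exactly the argument the paper has in mind: the paper dispenses with Corollary~\ref{T17} by remarking that $B(0,\varrho)$ is homeomorphic to $\bbbr^n$ via a map that can be chosen locally bi-Lipschitz, and your conjugation by the radial map $\Psi$ (together with the observation that conjugation preserves both the ``identity on a nonempty open set'' property and local bi-Lipschitz regularity) fills in precisely those details. No gaps.
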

The next result is a version of Lemma~\ref{T15} in the case of homeomorphisms and bi-Lipschitz mappings. However, it is not a good one, because the assumption that $F$ maps $B(0, \varrho)$ onto itself is very restrictive. We will remove this assumption later in Lemma~\ref{T19} using the annulus theorem.
\begin{lemma} 
\label{T18}
Let $F: B(0, \varrho) \twoheadrightarrow B(0, \varrho)$ be an orientation preserving homeomorphism. Then for any $\delta \in (0, \varrho)$, there is $\tau\in (\delta,\varrho)$ and a homeomorphism $\widehat{F}: B(0, \varrho) \twoheadrightarrow B(0, \varrho)$ such that
\begin{equation} 
\label{eq7}
\widehat{F}(x) = \
\begin{cases}
F(x) & \text{for } x \in B(0, \varrho)\setminus B(0,\tau),\\
x & \text{ for } x \in B(0, \delta).
\end{cases}
\end{equation}
If $F$ is locally bi-Lipschitz, then $\widehat{F}$ is locally bi-Lipschitz as well.
\end{lemma}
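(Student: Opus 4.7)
The plan is to use \Cref{T17} (the stable homeomorphism theorem for balls) to decompose $F$ as a composition of self-homeomorphisms each fixing a nonempty open ball pointwise, and then to conjugate each factor by a compactly supported homeomorphism that carries $\overline{B(0,\delta)}$ into the corresponding fixed ball.

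First, we apply \Cref{T17} to write $F = f_1 \circ \cdots \circ f_k$, with each $f_i:B(0,\varrho)\twoheadrightarrow B(0,\varrho)$ an orientation preserving self-homeomorphism equal to the identity on a nonempty open ball $U_i$. By the remark following the definition of stability, we may assume the balls $U_i$ are pairwise disjoint and compactly contained in $B(0,\varrho)$. If $F$ is locally bi-Lipschitz, we use the second part of \Cref{T17} to take the $f_i$ locally bi-Lipschitz as well.

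Next, for each $i=1,\ldots,k$, we would construct a compactly supported (in $B(0,\varrho)$) self-homeomorphism $\phi_i$ of $B(0,\varrho)$ such that $\phi_i(\overline{B(0,\delta)})\subset U_i$. Such $\phi_i$ are produced by a standard compactly supported ambient isotopy sliding $\overline{B(0,\delta)}$ onto a small closed ball inside $U_i$; in the locally bi-Lipschitz setting we choose $\phi_i$ piecewise linear (hence bi-Lipschitz). Setting $\widetilde f_i:=\phi_i^{-1}\circ f_i\circ\phi_i$, for any $x\in B(0,\delta)$ we have $\phi_i(x)\in U_i$, so $f_i(\phi_i(x))=\phi_i(x)$ and therefore $\widetilde f_i(x)=x$. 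Moreover, a direct check shows that $\widetilde f_i$ coincides with $f_i$ off the compact set $M_i:=\supp\phi_i\cup f_i^{-1}(\supp\phi_i)\subset B(0,\varrho)$. Define $\widehat F:=\widetilde f_1\circ\widetilde f_2\circ\cdots\circ\widetilde f_k$; every factor fixes $B(0,\delta)$ pointwise, so $\widehat F|_{B(0,\delta)}=\id$.

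The main obstacle will be verifying that $\widehat F=F$ on $B(0,\varrho)\setminus B(0,\tau)$ for a suitable $\tau\in(\delta,\varrho)$, since the modification regions $M_j$ propagate through the composition in a potentially complicated way. To handle this, we introduce the compact set
\[
C:=\bigcup_{j=1}^{k}(f_{j+1}\circ\cdots\circ f_k)^{-1}(M_j),
\]
with the empty composition understood as the identity. Since each partial composition $f_{j+1}\circ\cdots\circ f_k$ is a self-homeomorphism of $B(0,\varrho)$ and each $M_j$ is compact in $B(0,\varrho)$, the set $C$ is compact in $B(0,\varrho)$, so we may pick $\tau\in(\delta,\varrho)$ with $C\subset B(0,\tau)$. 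For $x\notin B(0,\tau)$, a short induction on $j$ shows that the partial image $(f_{j+1}\circ\cdots\circ f_k)(x)$ avoids $M_j$ for each $j$, so each $\widetilde f_j$ acts as $f_j$ along the trajectory of $x$, yielding $\widehat F(x)=F(x)$. The locally bi-Lipschitz statement then follows automatically, since $\widehat F$ is a finite composition of locally bi-Lipschitz maps.
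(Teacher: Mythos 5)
Your argument is correct and follows essentially the same route as the paper: decompose $F$ via \Cref{T17} and conjugate each factor $f_i$ by a compactly supported homeomorphism carrying $\overline{B}(0,\delta)$ into its fixed ball $U_i$. The only (immaterial) difference is in locating $\tau$: you take $\tau$ so that $B(0,\tau)$ contains the compact set $C$ collecting all pulled-back modification regions, while the paper supports all conjugators in a common ball $B(0,\lambda)$ and uses that $|f_i(x)|\to\varrho$ as $|x|\to\varrho$ to keep the partial compositions outside $B(0,\lambda)$; both verifications of the first line of \eqref{eq7} are valid.
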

\begin{proof}
By Corollary~\ref{T17}, there are homeomorphisms $f_i: B(0, \varrho) \twoheadrightarrow B(0, \varrho)$, $i = 1, \ldots,k$, and balls $\overbar{U}_i\subset B(0,\varrho)$ such that
\begin{equation} 
\label{3eq7}
F = f_1 \circ \ldots \circ f_k, \quad f_i|_{U_i} = \id.
\end{equation}
Let $\lambda\in (\delta,\varrho)$ be such that $\bigcup_{i=1}^k\overbar{U}_i\subset B(0,\lambda)$. 
Since $f_i$ are surjective homeomorphisms of $B(0, \varrho)$, we have $|f_i(x)| \to \varrho$ as $|x| \to \varrho$, so there is $\tau\in(\lambda,\varrho)$ such that 
\begin{equation}
\label{eq9}
|f_k(x)|>\lambda,
\quad
|(f_{k-1}\circ f_k)(x)|>\lambda,
\ldots,
|(f_1\circ\ldots\circ f_k)(x)|>\lambda
\quad
\text{whenever } |x|>\tau.
\end{equation}
Let $\psi_i:B(0,\varrho)\twoheadrightarrow B(0,\varrho)$ be a diffeomorphism that maps $B(0,\delta)$ onto $U_i$ and equals identity on $B(0,\varrho)\setminus B(0,\lambda)$. Then
$\psi_i^{-1}\circ f_i\circ\psi_i:B(0,\varrho)\twoheadrightarrow B(0,\varrho)$ is a homeomorphism which coincides with identity on $B(0,\delta)$. We set
$$
\widehat{F} := \psi_1^{-1} \circ f_1 \circ \psi_1 \, \circ \, \psi^{-1}_2 \circ f_2 \circ \psi_2 \,  \circ \, \ldots \circ \,  \psi_k^{-1} \circ f_k \circ \psi_k.
$$
Clearly, $\widehat{F}$ satisfies the second condition in \eqref{eq7}, and \eqref{eq9} along with a simple induction imply that $\widehat{F}$ satisfies the first condition in \eqref{eq7}.
Moreover, if $F$ is locally bi-Lipschitz, we can assume that $f_i$ are locally bi-Lipschitz and hence $\widehat{F}$ is locally bi-Lipschitz.
\end{proof}

In accordance with our definitions of flat topological and bi-Lipschitz balls, we say that $S\subset\bbbr^n$ is a \emph{flat topological} (or flat bi-Lipschitz) \emph{sphere}, if there is a (bi-Lipschitz) homeomorphism $F:\bbbs^{n-1}\times(-1,1)\to\bbbr^n$ such that $F(\bbbs^{n-1}\times\{0\})=S$. 

\begin{remark} 
\label{R1}
There is an elegant characterization of flat topological (or flat bi-Lipschitz) spheres and balls in $\bbbr^n$. 
\begin{itemize}
    \item[a)] A topological (or bi-Lipschitz) sphere $S\subset \bbbr^n$ is flat if and only if it is a (bi-Lipschitz) submanifold of $\bbbr^n$, that is every point $x\in S$ has a neighborhood $U$ such that the pair $(U,S\cap U)$ is (bi-Lipschitz) homeomorphic to ${(\bbbb^n, \{(x_1,\ldots,x_n)\in \bbbb^n} \,\,:\,\,x_n=0\})$.
    \item[b)] A topological (or bi-Lipschitz) closed ball $D\subset\bbbr^n$ is flat if and only if $\partial D$ is a flat topological (or flat bi-Lipschitz) sphere.
\end{itemize}
\end{remark}
Obviously, a flat sphere is a submanifold, both in the topological and the bi-Lipschitz case. The reverse assertion in a) follows from the seminal paper of Brown \cite[Lemma 6 and Theorem 4]{brown2} (note that a flat sphere as per our definition is a bi-collared sphere in Brown's paper), see also \cite{connely}. The bi-Lipschitz case is covered in \cite[Theorem 7.8]{LuukkainenVaisala}.
Assertion b) follows from the generalized Sch\"onflies theorem proved by Brown \cite[Theorem 5]{brown}; the bi-Lipschitz case again follows immediately from \cite[Theorem 7.8]{LuukkainenVaisala}.
\begin{lemma}[Annulus theorem] 
\label{T20}
Let $S_1, S_2 \subset \bbbr^n$ be two disjoint flat topological (or flat bi-Lipschitz) spheres such that $S_1$ is contained in the bounded component of $\bbbr^n \setminus S_2$. Then the set bounded by $S_1$ and $S_2$ is (bi-Lipschitz) homeomorphic to the annulus $\overbar{\bbbb}^n \setminus B^n(0, 1/2)$.
\end{lemma}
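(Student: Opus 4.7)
The plan is to derive \Cref{T20} from the stable homeomorphism theorem (\Cref{T26}) together with the flat Schoenflies theorem (\Cref{R1}). The strategy has three parts: normalize the outer sphere $S_2$ to $\bbbs^{n-1}$, realize the inner flat ball $D_1$ bounded by $S_1$ as $\Phi(\overbar{B}(0,1/2))$ for an orientation-preserving (bi-Lipschitz) homeomorphism $\Phi$ of $\bbbr^n$, and finally use stability to modify $\Phi$ so that it is the identity near $\bbbs^{n-1}$ while still sending $\overbar{B}(0,1/2)$ onto $D_1$.

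For the first two parts, \Cref{R1} gives flat closed balls $D_1\subset\mathring{D}_2\subset\bbbr^n$ bounded by $S_1$ and $S_2$, each with a (bi-Lipschitz) bi-collar neighborhood. Combining the bi-collar of $S_2$ with Brown's generalized Schoenflies theorem yields an orientation-preserving (bi-Lipschitz) homeomorphism $\Psi:\bbbr^n\twoheadrightarrow\bbbr^n$ with $\Psi(\overbar{\bbbb}^n)=D_2$ and $\Psi(\bbbs^{n-1})=S_2$. Passing to the preimage under $\Psi$, I reduce to the case $S_2=\bbbs^{n-1}$, $D_2=\overbar{\bbbb}^n$, and $S_1\subset\mathring{\bbbb}^n$ a flat sphere bounding a flat closed ball $D_1\subset\mathring{\bbbb}^n$. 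Applying the same theorem to $S_1$, with target ball rescaled to radius $1/2$, produces an orientation-preserving (bi-Lipschitz) homeomorphism $\Phi:\bbbr^n\twoheadrightarrow\bbbr^n$ with $\Phi(\overbar{B}(0,1/2))=D_1$. To finish, it then suffices to upgrade $\Phi$ to a (bi-Lipschitz) homeomorphism $H:\bbbr^n\twoheadrightarrow\bbbr^n$ with $H(\overbar{B}(0,1/2))=D_1$ and $H=\id$ outside $\mathring{\bbbb}^n$, since then $H$ restricts to the required (bi-Lipschitz) homeomorphism of $\overbar{\bbbb}^n\setminus B(0,1/2)$ onto $\overbar{\bbbb}^n\setminus\mathring{D}_1$, which is exactly the set bounded by $S_1$ and $S_2$.

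For this last upgrade I would invoke \Cref{T26} to write $\Phi=f_1\circ\cdots\circ f_k$, with each $f_i$ equal to the identity on a nonempty open set $U_i$ (locally bi-Lipschitz in the bi-Lipschitz case). The plan is to inductively replace each $f_i$ by a (bi-Lipschitz) homeomorphism $\widetilde f_i$ that agrees with $f_i$ on a precompact neighborhood of the set $(f_{i+1}\circ\cdots\circ f_k)(\overbar{B}(0,1/2))$ and is the identity on a neighborhood of $\bbbs^{n-1}$, using the open set $U_i$ as an anchor for a compactly supported isotopy that splices $f_i$ with the identity through the flat bi-collar of $\bbbs^{n-1}$. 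The composition $H:=\widetilde f_1\circ\cdots\circ\widetilde f_k$ then has the required properties and restricts to the annulus homeomorphism sought in the statement.

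The genuinely hard point is this last straightening step: converting the abstract stability decomposition of $\Phi$ into an isotopy with controlled compact support. In practice this is the content of Kirby's torus trick \cite{kirby} (with Quinn \cite{Quinn} for $n=4$) in the topological case and of Sullivan's bi-Lipschitz torus trick as worked out by Tukia-V\"ais\"al\"a \cite{TukiaV} in the bi-Lipschitz case. Consequently a fully detailed proof sits noticeably deeper than the rest of the paper, and \Cref{T20} is effectively used here as a black box supplied by those sources, with the role of \Cref{T26} made visible through the reduction above.
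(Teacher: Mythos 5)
The paper does not prove \Cref{T20} at all: it is stated as a known result, with the derivation ``stable homeomorphism theorem $+$ generalized Sch\"onflies $\Rightarrow$ annulus theorem'' attributed to Brown and Gluck \cite{BrownGluck} in the topological case and to Tukia--V\"ais\"al\"a \cite[Theorem 3.12]{TukiaV} in the bi-Lipschitz case. Your first two reduction steps (normalizing $S_2$ to $\bbbs^{n-1}$ via \Cref{R1} and Sch\"onflies, then realizing $D_1$ as $\Phi(\overbar{B}(0,1/2))$ for a global orientation-preserving homeomorphism $\Phi$) are sound and follow exactly the route the paper cites. The problem is your third paragraph, which is where the entire content of the theorem lives and where the argument as written does not go through.

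Concretely: producing from $\Phi$ a homeomorphism $H$ with $H(\overbar{B}(0,1/2))=D_1$ and $H=\id$ outside $\bbbb^n$ is not a routine ``upgrade'' --- it is equivalent to the annulus theorem itself. Your inductive scheme requires, for each factor $f_i$ (a self-homeomorphism of $\bbbr^n$ that is the identity on some open $U_i$), a replacement $\widetilde f_i$ agreeing with $f_i$ on a prescribed compact set and equal to the identity near $\bbbs^{n-1}$. The conjugation device that works in \Cref{T18} (compressing the region where you want triviality into $U_i$ by an ambient self-map of the ball) is unavailable here: no self-homeomorphism of $\bbbb^n$ can carry a neighborhood of $\partial\bbbb^n$ into a compactly contained set $U_i$, and the intermediate images $(f_{i+1}\circ\cdots\circ f_k)(\overbar{B}(0,1/2))$ need not even lie in $\bbbb^n$. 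The statement that a homeomorphism which is the identity on an open set agrees on any prescribed compact set with a compactly supported homeomorphism is precisely the ``very tricky'' geometric core of \cite{BrownGluck}, and asserting it via ``a compactly supported isotopy anchored at $U_i$'' is not a proof. Finally, your attribution of this step to the torus trick is off: the torus trick (Kirby \cite{kirby}, Quinn \cite{Quinn}; Sullivan/Tukia--V\"ais\"al\"a in the Lipschitz case) is the proof of \Cref{T26}, which you have already assumed; the remaining implication is elementary but nontrivial geometry. As it stands, your write-up reduces the lemma to an unproved claim of essentially the same depth, so either carry out the Brown--Gluck argument or, as the paper does, cite it explicitly as the proof.
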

Brown and Gluck \cite{BrownGluck} proved that the annulus theorem (in the case of flat topological spheres) is a direct consequence of the stable homeomorphism theorem (in the case of homeomorphisms) and the generalized Sch\"onflies theorem \cite[Theorem 5]{brown}. See Corollary on page 8 and Theorem~3.5(i) in \cite{BrownGluck}. Their argument is elementary and geometric (but very tricky) and it verbatim applies to the bi-Lipschitz case: the bi-Lipschitz stable homeomorphism theorem implies the bi-Lipschitz annulus theorem, see \cite[Theorem 3.12]{TukiaV}.

\begin{remark} 
\label{T30} 
By a~radial extension of a~homeomorphism $f: \partial B(0, \varrho) \twoheadrightarrow \partial B(0, \varrho)$ we mean a mapping $\tilde{f}: \bbbr^n \twoheadrightarrow \bbbr^n$ defined as $\tilde{f}(x) = \varrho^{-1}|x| f(\varrho x/|x|)$ for $x \neq 0$ and $\tilde{f}(0) = 0$. Clearly, $\tilde{f}$ is a~homeomorphism and if $f$ is bi-Lipschitz, then $\tilde{f}$ is bi-Lipschitz.
\end{remark}
The next result, \cite[Theorem 3.16]{TukiaV}, is a generalization of Lemma~\ref{T15} to the case of homeomorphisms and bi-Lipschitz maps. It will allow us to use Palais' argument (Lemma~\ref{T3}) beyond the realm of diffeomorphisms (Lemma~\ref{T21}).
\begin{lemma} 
\label{T19}
Let $F: \overbar{B}(0, \varrho) \to \bbbr^n$, $F(0) = 0$, be an orientation preserving (bi-Lipschitz) homeomorphism. Then, there is $\delta \in (0, \varrho/2)$  and a (bi-Lipschitz) homeomorphism $\widetilde{F}: \overbar{B}(0, \varrho) \to \bbbr^n$ such that $\widetilde{F} = F$ on $\overbar{B}(0, \varrho)\setminus B(0,\varrho/2)$ and $F = \id$ on $\overbar{B}(0, \delta)$. 
\end{lemma}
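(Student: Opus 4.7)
The plan is to reduce \Cref{T19} to \Cref{T18}, which already handles the case when $F$ is a self-homeomorphism of a ball. The bridge is an auxiliary (bi-Lipschitz) homeomorphism
\[
\Phi:\overbar{B}(0,\varrho)\twoheadrightarrow F(\overbar{B}(0,\varrho))
\]
that equals $F$ on a neighborhood of $\partial B(0,\varrho)$ and equals the identity on some small closed ball $\overbar{B}(0,r)$ around the origin. Once such $\Phi$ is constructed, $H:=\Phi^{-1}\circ F$ is an orientation-preserving (bi-Lipschitz) self-homeomorphism of $\overbar{B}(0,\varrho)$ fixing $0$ and equal to the identity near $\partial B(0,\varrho)$; applying \Cref{T18} to $H$ produces $\widehat{H}$ that equals $H$ (hence the identity) near $\partial B(0,\varrho)$ and equals the identity on some $\overbar{B}(0,\delta)$ with $\delta<r$. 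The desired map is then $\widetilde{F}:=\Phi\circ\widehat{H}$, which coincides with $\Phi=F$ near $\partial B(0,\varrho)$ and with $\Phi=\id$ on $\overbar{B}(0,\delta)$.

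To construct $\Phi$, use $F(0)=0$ to pick $a\in(0,\varrho/4)$ and then $r>0$ small enough so that $\overbar{B}(0,r)\subset F(\mathring{B}(0,a))$. Declare $\Phi:=F$ on $\overbar{B}(0,\varrho)\setminus\mathring{B}(0,a)$ and $\Phi:=\id$ on $\overbar{B}(0,r)$. The remaining task is to define $\Phi$ on the closed middle annulus $A:=\overbar{B}(0,a)\setminus\mathring{B}(0,r)$ as a (bi-Lipschitz) homeomorphism onto $B:=F(\overbar{B}(0,a))\setminus\mathring{B}(0,r)$, matching $F$ on $\partial B(0,a)$ and the identity on $\partial B(0,r)$. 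By the flatness hypothesis on $F$, the sphere $F(\partial B(0,a))$ is a flat topological (respectively, flat bi-Lipschitz) sphere, so $A$ and $B$ are annular regions bounded by disjoint flat spheres; by the annulus theorem (\Cref{T20}), each is (bi-Lipschitz) homeomorphic to $\bbbs^{n-1}\times[0,1]$. Choosing the identifications so that the inner sphere maps to $\bbbs^{n-1}\times\{0\}$ via the natural rescaling, the construction of $\Phi|_A$ reduces to producing a (bi-Lipschitz) self-homeomorphism of $\bbbs^{n-1}\times[0,1]$ that is the identity on $\bbbs^{n-1}\times\{0\}$ and a prescribed (bi-Lipschitz) homeomorphism on $\bbbs^{n-1}\times\{1\}$. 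Such a map is produced by the radial extension trick (\Cref{T30}) on a thin collar near the outer sphere that absorbs the prescribed outer boundary data, continued by the identity across the remaining collar.

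To guarantee that $\widetilde{F}=F$ on the whole annulus $\overbar{B}(0,\varrho)\setminus B(0,\varrho/2)$, we apply \Cref{T18} not to the full $H$ but to its restriction to $B(0,2a)$, on which $H$ is still a self-homeomorphism (being the identity on $B(0,2a)\setminus B(0,a)$). This produces $\widehat{H}'$ equal to $H$ outside some $B(0,\tau)$ with $\tau<2a<\varrho/2$, and equal to the identity on $\overbar{B}(0,\delta)$; extending $\widehat{H}'$ by the identity on $B(0,\varrho)\setminus B(0,2a)$ yields the required $\widehat{H}$ on $B(0,\varrho)$.

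The principal obstacle is precisely the simultaneous two-sided boundary matching in the construction of $\Phi$ on the middle annulus: while the annulus theorem abstractly identifies both $A$ and $B$ with a product annulus, reconciling those identifications with the two prescribed boundary homeomorphisms in the appropriate regularity category is where the depth of the flatness hypothesis and the (bi-Lipschitz or topological) annulus theorem becomes essential. This mirrors the paper's broader theme that passing from diffeomorphisms to less regular categories demands substantially deeper geometric-topological input, and it is exactly this step that distinguishes \Cref{T19} from its elementary smooth predecessor \Cref{T15}.
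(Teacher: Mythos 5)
Your overall reduction is reasonable and, in outline, quite close to the paper's: both arguments use the annulus theorem to identify the image region with a standard annulus and must then reconcile prescribed boundary data on the two boundary spheres. But the crucial step of your proof --- the construction of $\Phi$ on the middle annulus $A=\overbar{B}(0,a)\setminus\mathring{B}(0,r)$ --- contains a genuine gap. You need a self-homeomorphism of $\bbbs^{n-1}\times[0,1]$ equal to a prescribed homeomorphism $g$ of $\bbbs^{n-1}$ on one end and to the identity on the other, and you claim this ``is produced by the radial extension trick (Remark~\ref{T30}) on a thin collar \ldots continued by the identity across the remaining collar.'' This does not work: the radial extension of $g$ over a collar $\bbbs^{n-1}\times[1-\eps,1]$ is $(x,t)\mapsto(g(x),t)$, which restricts to $g$ on \emph{both} ends of the collar, so it cannot be continued by the identity --- the resulting map would be discontinuous along $\bbbs^{n-1}\times\{1-\eps\}$ unless $g=\id$. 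Interpolating between $g$ and $\id$ across an annulus is exactly the nontrivial content here: it amounts to a (pseudo)isotopy from $g$ to the identity, and in the topological and bi-Lipschitz categories this requires the stable homeomorphism theorem, not an elementary radial construction.

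The paper resolves precisely this point by radially extending the boundary homeomorphism over the \emph{entire} sub-ball (producing a self-homeomorphism of the ball fixing the origin) and then invoking Lemma~\ref{T18} --- whose proof rests on the stable homeomorphism theorem --- to replace it by the identity on a smaller concentric ball while keeping it unchanged near the outer sphere; this is done once for the outer bounding sphere and once for the inner one. So the deep input you defer to a later application of Lemma~\ref{T18} (to $H=\Phi^{-1}\circ F$) is already needed to build $\Phi$ itself; once the collar step is repaired this way, your construction essentially collapses into the paper's proof and the extra layer involving $H$ becomes unnecessary. (A minor further point: there is no ``flatness hypothesis on $F$'' in Lemma~\ref{T19}; the flatness of $F(\partial B(0,a))$ follows because $F$ is a homeomorphism on a full neighborhood of $\partial B(0,a)$ inside $\overbar{B}(0,\varrho)$, which supplies a bicollar.)
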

\begin{proof}
Firstly, note that since $F(0) = 0$, there is  $\delta \in (0, \varrho/4)$ for which $\overbar{B}(0, 2\delta) \subset F(B(0, \varrho/2))$. 

The set $\partial F(B(0, \varrho/2)) = F(\partial B(0, \varrho/2))$ is a flat topological sphere; let $A$ denote the compact region between $\partial F(B(0, \varrho/2))$ and $\partial B(0, 2\delta)$. By Lemma~\ref{T20}, there is a homeomorphism $H: \overbar{B}(0, \varrho/2) \setminus B(0, 2\delta) \twoheadrightarrow A$ and therefore
$$
H^{-1} \circ F: \partial B(0, \varrho/2) \twoheadrightarrow \partial B(0, \varrho/2) \text{ and } H: \partial B(0, 2\delta) \twoheadrightarrow \partial B(0, 2\delta).
$$
We can find radial extensions of $H^{-1} \circ F$ and $H$ to homeomorphisms $\Phi: \overbar{B}(0, \varrho/2) \twoheadrightarrow \overbar{B}(0, \varrho/2)$ and $\Psi: \overbar{B}(0, 2\delta) \twoheadrightarrow \overbar{B}(0, 2\delta)$, respectively. By Lemma~\ref{T18}, we find homeomorphisms $\widetilde{\Phi}: \overbar{B}(0, \varrho/2) \twoheadrightarrow \overbar{B}(0, \varrho/2)$ and $\widetilde{\Psi}: B(0, 2\delta) \twoheadrightarrow B(0, 2\delta)$ such that $\widetilde{\Phi} = \Phi$ near $\partial B(0, \varrho/2)$ and $\widetilde{\Phi} = \id$ on $B(0, 2\delta)$ and $\widetilde{\Psi} = \Psi$ near $\partial B(0, 2\delta)$ and $\widetilde{\Psi} = \id$ on $B(0, \delta)$. In particular, $\widetilde{\Phi} = H^{-1} \circ F$ on $\partial B(0, \varrho/2)$ and $\widetilde{\Psi} = H$ on $\partial B(0, 2\delta)$.

Eventually, we set
$$
\widetilde{F}(x) = \begin{cases}
    F(x) & \text{ for } x \in \overbar{B}(0, \varrho) \setminus \overbar{B}(0, \varrho/2),\\
    H\circ\widetilde{\Phi}(x) & \text{ for } x \in \overbar{B}(0, \varrho/2) \setminus \overbar{B}(0, 2\delta), \\
    \widetilde{\Psi}(x) & \text{ for } x \in \overbar{B}(0, 2\delta).
\end{cases}
$$
Since $\widetilde{F}(\overbar{B}(0, \varrho/2) \setminus B(0, 2\delta)) = A$ and $H \circ \widetilde{\Phi}$ agrees with $F$ on $\partial B(0, \varrho/2)$, and $\widetilde{\Psi} = H=H\circ\widetilde{\Phi}$ on $\partial B(0, 2\delta)$, $\widetilde{F}$ is indeed a homeomorphism. Clearly, it satisfies the required properties.

If $F$ is additionally assumed to be bi-Lipschitz, then by Lemma~\ref{T20}, $H$ is a bi-Lipschitz homeomorphism and so are the radial extensions $\Phi$ and $\Psi$. Therefore, by Lemma~\ref{T18}, $\widetilde{\Phi}$ and $\widetilde{\Psi}$ are bi-Lipschitz and so is $\widetilde{F}$.
\end{proof}

\section{Proof of Theorems~\ref{T2} and~\ref{T8}}
\label{Proof23}

Now, we prove the remaining two main theorems, about bi-Lipschitz homeomorphisms and homeomorphisms. We begin by applying Palais' trick from Section~\ref{trick} to this setting. This is easy; the difficulty was in proving Lemma~\ref{T19} in the previous section.
\begin{lemma}
\label{T21}
Suppose that $H:\overbar{B}(0, \varrho) \to \bbbr^n$, $H(0) = 0$, is an orientation preserving (bi-Lipschitz) homeomorphism which can be extended as a (bi-Lipschitz) homeomorphism on a neighborhood of $\overbar{B}(0, \varrho)$. Then, for any $\eps > 0$, there is a (bi-Lipschitz) homeomorphism $\widetilde{H}:\bbbr^n \twoheadrightarrow \bbbr^n$ such that
\begin{equation}
\label{eq10}
\widetilde{H}(x)=
\begin{cases} 
H(x) &\text{ if } x\in\overbar{B}(0,\varrho),\\
x &\text{ if } \dist(x,A)\geq\eps,
\end{cases}
\end{equation}
where $A=\overbar{B}(0,\varrho)\cup H(\overbar{B}(0,\varrho))$.
\end{lemma}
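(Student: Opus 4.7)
The plan is to repeat the proof of Lemma~\ref{T3} almost verbatim, with the sole substantive change being the replacement of the smooth local linearization (Lemma~\ref{T15}) by its topological/bi-Lipschitz counterpart (Lemma~\ref{T19}). All the remaining ingredients of Palais' construction---the radial scaling $\Phi$, extension by identity outside a compact set, and gluing across $\partial B(0,\varrho)$---are manifestly compatible with both the $C^0$ and the bi-Lipschitz category, so the argument should go through without essential modification.

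More precisely, I would first fix $\tau>0$ so small that $H$ extends as a (bi-Lipschitz) homeomorphism to $B(0,\varrho+3\tau)$ and so that $B(0,\varrho+3\tau)\cup H(B(0,\varrho+3\tau))\subset B(A,\eps)$. Applying Lemma~\ref{T19} to this extension produces a (bi-Lipschitz) homeomorphism $H_1$ defined on $B(0,\varrho+3\tau)$ with $H_1=H$ on a neighborhood of $\partial B(0,\varrho)$ and $H_1=\id$ on some ball $B(0,\delta)$, $\delta<\varrho/2$. Next, I would define exactly the same smooth radial map $\Phi(x)=\phi(|x|)x$ as in the proof of Lemma~\ref{T3}, where $\phi$ is chosen so that $\Phi$ acts as the linear scaling by $\delta(\varrho+\tau)^{-1}$ on $B(0,\varrho+\tau)$, maps $B(0,\varrho+\tau)$ onto $B(0,\delta)$, and equals the identity outside $B(0,\varrho+2\tau)\subset B(A,\eps)$. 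Since $\Phi$ is a smooth diffeomorphism with bounded derivatives on any compact set and a global Lipschitz inverse on bounded sets, it is automatically a bi-Lipschitz homeomorphism of $\mathbb{R}^n$.

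I would then form $H_1\circ\Phi^{-1}\circ H_1^{-1}$ on $H_1(B(0,\varrho+3\tau))$ and, exactly as in \eqref{eq3} and \eqref{eq4}, observe that it is the identity in a neighborhood of $\partial H_1(B(0,\varrho+3\tau))$, so that extending it by identity outside yields a (bi-Lipschitz) homeomorphism of $\mathbb{R}^n$ that is the identity off $B(A,\eps)$. Composing with $\Phi$ produces
\[
H_2=\bigl(\reallywidetilde{H_1\circ\Phi^{-1}\circ H_1^{-1}}\bigr)\circ\Phi:\mathbb{R}^n\twoheadrightarrow\mathbb{R}^n,
\]
which is also (bi-Lipschitz and) identity outside $B(A,\eps)$. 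The computation that $H_2=H_1$ on $B(0,\varrho+\tau)$ is identical to that in the proof of Lemma~\ref{T3}: for such $x$, $\Phi(x)\in B(0,\delta)$, where $H_1^{-1}=\id$, so $H_2(x)=H_1(\Phi^{-1}(\Phi(x)))=H_1(x)$. Finally, I would glue $H_2$ on $\mathbb{R}^n\setminus\overbar{B}(0,\varrho)$ with $H$ on $\overbar{B}(0,\varrho)$ along a neighborhood of $\partial B(0,\varrho)$ where both agree (as $H_2=H_1=H$ there) to obtain $\widetilde{H}$ satisfying \eqref{eq10}.

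The main conceptual difficulty---bridging the gap between $DF(0)$ in the smooth case and the absence of a derivative in the topological/bi-Lipschitz setting---was already absorbed into Lemma~\ref{T19}, which in turn rested on the stable homeomorphism theorem and the annulus theorem. Consequently the only thing to verify here is that each constituent of Palais' construction preserves the bi-Lipschitz class, which is routine: compositions and inverses of bi-Lipschitz maps on bounded sets remain bi-Lipschitz, and the gluing takes place on a compact set where the local bi-Lipschitz constants are uniformly controlled. So the hard work is really done by Lemma~\ref{T19}, and Lemma~\ref{T21} follows from a direct transcription of Palais' argument.
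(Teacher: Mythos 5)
Your proposal is correct and matches the paper's proof exactly: the paper also proves Lemma~\ref{T21} by transcribing the proof of Lemma~\ref{T3} verbatim into the (bi-Lipschitz) homeomorphism category, replacing Lemma~\ref{T15} with Lemma~\ref{T19}. Your explicit verification that each step (the radial map $\Phi$, the extension by identity, the gluing) preserves the bi-Lipschitz class is exactly the routine check the paper leaves implicit.
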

\begin{proof}
The proof of this lemma is a~verbatim copy of Lemma~\ref{T3} with `homeomorphism' or `bi-Lipschitz homeomorphism' instead of `diffeomorphism'. Also, instead of Lemma~\ref{T15}, we use Lemma~\ref{T19}.
\end{proof}

\begin{corollary}
\label{T22}
Let $G: \overbar{B}(a, r) \twoheadrightarrow \overbar{B}(a, r)$ be an orientation preserving (bi-Lipschitz) homeomorphism. Then, for any $\eps > 0$, there is a (bi-Lipschitz) homeomorphism $\widetilde{G}: \bbbr^n \twoheadrightarrow \bbbr^n$ such that $\widetilde{G} = G$ on $\overbar{B}(a, r)$ and $\widetilde{G} = \id$ outside $B(a, r + \eps)$.
\end{corollary}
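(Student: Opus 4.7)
The plan is to imitate the proof of Corollary~\ref{T7} almost verbatim, with Lemma~\ref{T21} replacing Lemma~\ref{T3}. The only new ingredient needed is a (bi-Lipschitz) analogue of the ``point-pushing'' diffeomorphism that in the smooth case was produced by a $1$-parameter group of compactly supported vector fields.

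If $G(a)=a$, Lemma~\ref{T21} applied to the translated map $H(x):=G(x+a)-a$ on $\overbar{B}(0,r)$ immediately yields an extension equal to $H$ on $\overbar{B}(0,r)$ and to the identity outside $B(0,r+\eps)$; translating back gives the required $\widetilde{G}$.

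If $G(a)\neq a$, set $b:=G(a)$. Since $G:\overbar{B}(a,r)\twoheadrightarrow\overbar{B}(a,r)$ is a surjective homeomorphism, invariance of domain forces $G(\partial B(a,r))=\partial B(a,r)$, hence $s:=|a-b|<r$. I would then construct a (bi-Lipschitz) homeomorphism $F:\bbbr^n\twoheadrightarrow\bbbr^n$ with $F(b)=a$ and $F=\id$ outside $B(a,r)$ by an explicit cutoff: given $p_0,p_1\in\overbar{B}(a,\varrho)$ with $\varrho<r$ and $|p_0-p_1|<r-\varrho$, the map
\[
x\longmapsto x+\varphi(|x-a|)(p_1-p_0),
\]
with $\varphi:[0,\infty)\to[0,1]$ a piecewise linear cutoff equal to $1$ on $[0,\varrho]$ and to $0$ on $[r,\infty)$, is a bi-Lipschitz self-homeomorphism of $\bbbr^n$, is the identity outside $B(a,r)$, and sends $p_0$ to $p_1$; the bi-Lipschitz property follows because the perturbation has Lipschitz constant at most $|p_1-p_0|/(r-\varrho)<1$. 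Composing finitely many such small maps, each moving the current image of $b$ by at most $r/3$, produces the desired $F$. (In the purely topological setting no Lipschitz control is needed, so a single such displacement suffices.)

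With $F$ in hand, proceed exactly as in Corollary~\ref{T7}: set $T(x):=x-a$ and $G_1:=T\circ F\circ G\circ T^{-1}$, an orientation preserving (bi-Lipschitz) self-homeomorphism of $\overbar{B}(0,r)$ fixing~$0$. Lemma~\ref{T21} applied to $G_1$ with a suitably small parameter yields $\widetilde{G}_1:\bbbr^n\twoheadrightarrow\bbbr^n$ agreeing with $G_1$ on $\overbar{B}(0,r)$ and with $\id$ outside $B(0,r+\eps)$. Then $\widetilde{G}:=F^{-1}\circ T^{-1}\circ\widetilde{G}_1\circ T$ equals $G$ on $\overbar{B}(a,r)$ and reduces to the identity outside $B(a,r+\eps)$. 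The main obstacle, if one can call it that, is the construction of $F$; the deep input (stable homeomorphism and annulus theorems) has already been absorbed into Lemma~\ref{T19} and hence Lemma~\ref{T21}, so no further topology intervenes here.
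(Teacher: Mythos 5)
Your proposal follows essentially the same route as the paper: reduce to the case of a fixed point by composing with a point-pushing homeomorphism $F$, conjugate by the translation $T$, and apply Lemma~\ref{T21}. Your explicit cutoff construction of $F$ is a fine (and self-contained) substitute for the paper's $1$-parameter-group construction, and your verification of its bi-Lipschitz property and of the final identities for $\widetilde{G}$ is correct.

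One hypothesis of Lemma~\ref{T21} is left unverified, and it is exactly the point the paper flags as the only thing that needs checking: Lemma~\ref{T21} requires that the map on $\overbar{B}(0,r)$ extend to a (bi-Lipschitz) homeomorphism of a \emph{neighborhood} of $\overbar{B}(0,r)$ (unlike in the smooth setting, this is not part of the paper's standing convention for homeomorphisms on closed sets, and it is genuinely needed since the proof of Lemma~\ref{T21} begins by extending to $B(0,\varrho+3\tau)$). Your $G_1=T\circ F\circ G\circ T^{-1}$, and likewise $H$ in the case $G(a)=a$, is a priori only defined on $\overbar{B}(0,r)$. The fix is one line: extend $G$ radially outside $\overbar{B}(a,r)$ using its boundary values (Remark~\ref{T30}), which preserves the bi-Lipschitz property; then $G_1$ extends as required. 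With that sentence added, your argument is complete.
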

\begin{proof}
The proof is the same as the proof of Corollary~\ref{T7} when `diffeomorphism' is replaced with `(bi-Lipschitz) homeomorphism'. Also, we use Lemma~\ref{T21} in place of Lemma~\ref{T3}. To do so, one needs to check that $G_1$ (defined in the proof of Corollary~\ref{T7}) can be extended as a (bi-Lipschitz) homeomorphism on a neighborhood of $\overbar{B}(0, r)$. This is indeed true, as it suffices to extend $G$ radially to the exterior of $\overbar{B}(a,r)$, cf.\ Remark~\ref{T30}.

\end{proof}

Next, we proceed with the proof of Theorems~\ref{T2} and~\ref{T8} following closely that of Theorem~\ref{T1}.
\begin{lemma}
\label{T23}
Let $\{D_i\}_{i=1}^\ell$, $D_i\subset\bbbr^n$, be a family of pairwise disjoint flat topological (or flat bi-Lipschitz) closed balls and let $p_i\in\mathring{D}_i$, $i=1,\ldots,\ell$, be given. Then there is $\eps_o >0$ such that for any $\eps\in (0,\eps_o)$ there is a (bi-Lipschitz) homeomorphism $F_\eps:\bbbr^n\twoheadrightarrow\bbbr^n$ satisfying $F_\eps(\overbar{B}(p_i,\eps))=D_i$ for $i=1,\ldots,\ell$ and 
$$
F_\eps(x)=x
\text{ if } \dist\Big(x,\bigcup \nolimits_{i=1}^\ell D_i\Big)\geq\eps.
$$
\end{lemma}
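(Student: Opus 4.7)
The plan is to follow the proof of Lemma~\ref{T4} almost verbatim, substituting Lemma~\ref{T21} for Lemma~\ref{T3} and using the \emph{flatness} hypothesis as the source of the needed local charts. Concretely, I would first fix $\eps_o\in\big(0,\tfrac{1}{4}\min_{i\neq j}\dist(D_i,D_j)\big)$ small enough that $\overbar{B}(p_i,\eps_o)\subset\mathring{D}_i$ for every $i$, and take $\eps\in(0,\eps_o)$. Because each $D_i$ is a flat topological (resp.\ flat bi-Lipschitz) closed ball, by definition it admits a (bi-Lipschitz) homeomorphism $\overbar{\bbbb}^n\twoheadrightarrow D_i$ which extends to a (bi-Lipschitz) homeomorphism on a neighborhood of $\overbar{\bbbb}^n$. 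Precomposing with a suitable affine scaling gives a (bi-Lipschitz) homeomorphism $H_i:\overbar{B}(p_i,\eps)\twoheadrightarrow D_i$ with the analogous extension property, and this extension property is precisely the hypothesis under which Lemma~\ref{T21} applies.

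The single step where the argument must depart from the smooth case is the arrangement $H_i(p_i)=p_i$ (needed so that $\overbar{B}(p_i,\eps)\cup H_i(\overbar{B}(p_i,\eps))=D_i$, which is what forces Lemma~\ref{T21} to produce an extension supported in the $\eps$-neighborhood of $D_i$). I would therefore postcompose $H_i$ with a (bi-Lipschitz) self-homeomorphism $\Theta_i$ of $D_i$, identity on a neighborhood of $\partial D_i$, which sends $H_i(p_i)$ to $p_i$. In the topological case such a $\Theta_i$ is classical (combine a collar of $\partial D_i$ with an Alexander-type map inside a small ball of $\mathring D_i$ containing both points). In the bi-Lipschitz case I would transport the problem to $\overbar{\bbbb}^n$ via the flat chart and build the required map as a finite composition of maps of the form $x\mapsto x+\phi(|x|)v$, where $\phi$ is a Lipschitz cutoff equal to $1$ near the origin and $0$ near $\bbbs^{n-1}$ and $v$ is a small displacement; each such elementary map is bi-Lipschitz provided $|v|\,\lip(\phi)<1$, and concatenating finitely many of them along a polygonal path connecting the two points moves one preimage to the other while keeping a neighborhood of $\bbbs^{n-1}$ fixed.

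With $H_i(p_i)=p_i$ secured, I would apply Lemma~\ref{T21} to each $H_i$ with the set $A$ there equal to $D_i$, obtaining (bi-Lipschitz) homeomorphisms $\widetilde{H}_i:\bbbr^n\twoheadrightarrow\bbbr^n$ that agree with $H_i$ on $\overbar{B}(p_i,\eps)$ and equal identity whenever $\dist(x,D_i)\geq\eps$. By the choice of $\eps_o$, the sets $B(D_i,\eps)$ are pairwise disjoint, so defining $F_\eps$ to be $\widetilde{H}_i$ on each such neighborhood and the identity on the complement yields a (bi-Lipschitz) homeomorphism of $\bbbr^n$ with the required properties. The main obstacle in this plan is the bi-Lipschitz point-pushing step of the second paragraph; once it is dispatched, the rest is a routine transcription of the smooth proof with Lemma~\ref{T21} in place of Lemma~\ref{T3}.
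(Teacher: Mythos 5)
Your proposal is correct and is essentially the paper's proof, which simply transcribes the proof of Lemma~\ref{T4} with Lemma~\ref{T21} in place of Lemma~\ref{T3}. The bi-Lipschitz point-pushing step you spell out (moving $H_i(p_i)$ to $p_i$ by a self-homeomorphism of $D_i$ that is the identity near $\partial D_i$) is a detail the paper leaves implicit, and your construction of it via the flat chart and maps $x\mapsto x+\phi(|x|)v$ is sound.
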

\begin{proof}
The proof is the same as the proof of Lemma~\ref{T4} with `(bi-Lipschitz) homeomorphism' in place of `diffeomorphism'. Instead of Lemma~\ref{T3}, we use Lemma~\ref{T21}.
\end{proof}

As in Section~\ref{Proof1}, combining Lemma~\ref{T5} with Lemma~\ref{T23} yields the Euclidean version of Theorems \ref{T2} and \ref{T8}.
\begin{corollary}
\label{T24}
Let $U\subset\bbbr^n$ be a domain.
Suppose that $\{D_i\}_{i=1}^\ell$ and $\{D'_i\}_{i=1}^\ell$, $D_i, D'_i\subset U$, are two families of pairwise disjoint flat topological (or flat bi-Lipschitz) closed balls. If $F_i:D_i\twoheadrightarrow D'_i$, $i=1,2,\ldots,\ell$, are orientation preserving (bi-Lipschitz) homeomorphisms, then there is a (bi-Lipschitz) homeomorphism $F:\bbbr^n\twoheadrightarrow\bbbr^n$ such that $F|_{D_i}=F_i$ and $F=\id$ in $\bbbr^n\setminus U$.
\end{corollary}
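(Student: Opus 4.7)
The plan is to mirror the proof of Corollary~\ref{T6} almost verbatim, replacing each invocation of a smooth result with the corresponding topological or bi-Lipschitz one that has already been established in this section. Since Lemma~\ref{T5} produces a $C^\infty$-diffeomorphism, it also serves in the bi-Lipschitz and topological categories without any modification.

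First I would pick interior points $p_i\in\mathring{D}_i$ and $q_i\in\mathring{D}'_i$, and then apply Lemma~\ref{T23} twice to obtain, for all sufficiently small $\eps>0$, two (bi-Lipschitz) homeomorphisms $A_1,A_2:\bbbr^n\twoheadrightarrow\bbbr^n$ with
$$
A_1(\overbar{B}(p_i,\eps))=D_i,\qquad A_2(\overbar{B}(q_i,\eps))=D'_i,\qquad A_1=A_2=\id\text{ on }\bbbr^n\setminus U,
$$
where the last condition can be arranged by choosing $\eps$ small enough so that the $\eps$-neighborhoods of all the balls stay inside $U$. Next, I invoke Lemma~\ref{T5} (which is smooth, hence both bi-Lipschitz and topological) to produce a diffeomorphism $H:\bbbr^n\twoheadrightarrow\bbbr^n$, equal to the identity outside $U$, with $H(\overbar{B}(q_i,\eps))=\overbar{B}(p_i,\eps)$.

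With these auxiliary maps in hand, I form
$$
G_i:=H\circ A_2^{-1}\circ F_i\circ A_1:\overbar{B}(p_i,\eps)\twoheadrightarrow\overbar{B}(p_i,\eps),\qquad i=1,\ldots,\ell,
$$
which are orientation preserving (bi-Lipschitz) self-homeomorphisms of closed balls. Each $G_i$ is then extended to a global (bi-Lipschitz) homeomorphism $\widetilde{G}_i:\bbbr^n\twoheadrightarrow\bbbr^n$ using Corollary~\ref{T22}, arranging for $\widetilde{G}_i$ to coincide with the identity outside an arbitrarily small neighborhood of $\overbar{B}(p_i,\eps)$. By taking $\eps$ small enough that these neighborhoods are pairwise disjoint and contained in $U$, I can glue the $\widetilde{G}_i$ to a single (bi-Lipschitz) homeomorphism $G:\bbbr^n\twoheadrightarrow\bbbr^n$ with $G|_{\overbar{B}(p_i,\eps)}=G_i$ and $G=\id$ on $\bbbr^n\setminus U$.

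Finally, I set
$$
F:=A_2\circ H^{-1}\circ G\circ A_1^{-1}:\bbbr^n\twoheadrightarrow\bbbr^n.
$$
A direct check shows $F|_{D_i}=F_i$: for $x\in D_i$, $A_1^{-1}(x)\in\overbar{B}(p_i,\eps)$, so $G(A_1^{-1}(x))=G_i(A_1^{-1}(x))=H(A_2^{-1}(F_i(x)))$, and applying $A_2\circ H^{-1}$ recovers $F_i(x)$. Since $A_1,A_2,H,G$ are all the identity outside $U$, so is $F$. There is no serious obstacle here: Lemma~\ref{T19} (and the subsequent Lemma~\ref{T21} and Corollary~\ref{T22}) has already done the hard work of supplying a topological/bi-Lipschitz analogue of Palais' trick, so this corollary is a clean categorical transfer of the argument for Corollary~\ref{T6}.
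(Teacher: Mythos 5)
Your proposal is correct and coincides with the paper's argument: the paper proves Corollary~\ref{T24} by repeating the proof of Corollary~\ref{T6} verbatim, substituting Lemma~\ref{T23} for Lemma~\ref{T4} and Corollary~\ref{T22} for Corollary~\ref{T7}, exactly as you do (including keeping the smooth Lemma~\ref{T5} unchanged). Your write-up simply makes the details of that transfer explicit.
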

\begin{proof}
The proof follows the proof of Corollary~\ref{T6}. One needs to write `(bi-Lipschitz) homeomorphism' instead of `diffeomorphism' and invoke Lemma~\ref{T23} in place of Lemma~\ref{T4} and Corollary~\ref{T22} in place of Corollary~\ref{T7}.
\end{proof}

\begin{proof}[Proof of Corollary~\ref{T28}]
The proof is the same as the proof of Corollary~\ref{T13} except for the fact that it uses Corollary~\ref{T24} in place of Corollary~\ref{T6}. 
\end{proof}

\begin{proof}[Proof of Theorems \ref{T2} and \ref{T8}]
Like in the case of Theorem~\ref{T1}, we prove these results by reducing them to the Euclidean setting, i.\,e., to the case treated in Corollary~\ref{T24}.

Let $U$ be a domain containing $D_i, D'_i$ for $i = 1, \ldots, \ell$. Let $K\subset U$ be a flat topological (or flat bi-Lipschitz) closed ball. Then, there are (bi-Lipschitz) homeomorphisms $H,H':\mathcal{M}^n\twoheadrightarrow\mathcal{M}^n$ such that $H(D_i)\subset\mathring{K}$ and $H'(D_i')\subset\mathring{K}$ for all $i=1,2,\ldots,\ell$, and the homeomorphisms $H$ and $H'$ are identity outside $U$.

Below, we sketch the construction of $H$; homeomorphism $H'$ is constructed in the same way. 
We have to modify the construction of $H$ from Theorem~\ref{T1} since now we clearly cannot directly use a $1$-parameter group of diffeomorphisms. 

For each $i=1,2,\ldots,\ell$, we create a chain of coordinate systems 
$\{(\Phi_{ij},U_{ij})\}_{j=1}^{N_i}$
in $U$ that connects $D_i$ to $\mathring{K}$, i.\,e.,
$$
\Phi_{ij}:U_{ij}\twoheadrightarrow\bbbb^n,
\quad
\overbar{U}_{ij}\subset U,
\quad
D_i\subset U_{i1},
\quad
U_{ij}\cap U_{i,j+1}\neq\varnothing,
\quad
U_{iN_i}\cap\mathring{K}\neq\varnothing,
$$
and $\Phi_{ij}$ is a (bi-Lipschitz) homeomorphism. 

We can also guarantee that the chains corresponding to different balls are disjoint i.e.,
$$
\Big(\bigcup \nolimits_{j=1}^{N_s} U_{sj}\Big)\cap \Big(\bigcup \nolimits_{j=1}^{N_t} U_{tj}\Big)=\varnothing
\qquad
\text{whenever } s\neq t.
$$

Now for each $i$, we construct a (bi-Lipschitz) homeomorphism $H_i:\mathcal{M}^n\twoheadrightarrow\mathcal{M}^n$ that is identity outside $\bigcup_{j=1}^{N_i} U_{ij}$ and such that $H_i(D_i)\subset\mathring{K}$.
We construct $H_i$ as a composition of (bi-Lipschitz) homeomorphisms $H_i=H_{iN_i}\circ\ldots\circ H_{i1}$ such that
$$
H_{i1}(D_i)\subset U_{i2},
\quad
H_{i2}(H_{i1}(D_i))\subset U_{i3},\
\ldots,\
H_{iN_i}(\ldots(H_{i2}(H_{i1}(D_i))))\subset \mathring{K}.
$$
Each such homeomorphism $H_{ij}$ can be constructed in the coordinate system $(\Phi_{ij},U_{ij})$ using a $1$-parameter group of diffeomorphisms.
Note that the homeomorphisms $H_i$ glue to a (bi-Lipschitz) homeomorphism $H:\mathcal{M}^n\twoheadrightarrow\mathcal{M}^n$ with required properties.

The remaining part of the proof is the same as in the proof of Theorem~\ref{T1} except that we use Corollary~\ref{T24} in place of Corollary~\ref{T6}.
\end{proof}

\end{document}